\newtheorem{thm}{Theorem}[section]
\newtheorem{prop}[thm]{Proposition}
\newtheorem{rem}[thm]{Remark}
\def\square{\vbox{
      \hrule height 0.4pt
      \hbox{\vrule width 0.4pt height 5.5pt \kern 5.5pt \vrule width 0.4pt}
      \hrule height 0.4pt}}
\def\id{\mathrm{id}}
\def\ch\mathrm{c h}
\def\im{\mathrm{I m}}
\newcommand{\CoH}{\mathbf{CoH}}
\newcommand{\bfk}{\ensuremath{{\mathbf{k}}}}
\newcommand{\namedright}[3]{\ensuremath{#1\stackrel{#2}
 {\longrightarrow}#3}}
\newcommand{\nameddright}[5]{\ensuremath{#1\stackrel{#2}
 {\longrightarrow}#3\stackrel{#4}{\longrightarrow}#5}}
\newcommand{\namedddright}[7]{\ensuremath{#1\stackrel{#2}
 {\longrightarrow}#3\stackrel{#4}{\longrightarrow}#5
  \stackrel{#6}{\longrightarrow}#7}}
\newcommand{\larrow}{\relbar\!\!\relbar\!\!\rightarrow}
\newcommand{\llarrow}{\relbar\!\!\relbar\!\!\larrow}
\newcommand{\lllarrow}{\relbar\!\!\relbar\!\!\llarrow}
\newcommand{\lllnamedddright}[7]{\ensuremath{#1\stackrel{#2}
 {\lllarrow}#3\stackrel{#4}{\lllarrow}#5
  \stackrel{#6}{\lllarrow}#7}}
\newcommand{\qqed}{\hfill\Box}
\newcommand{\hlgy}[1]{\ensuremath{H_{*}(#1)}}
\newcommand{\rhlgy}[1]{\ensuremath{\widetilde{H}_{*}(#1)}} 
\newcommand{\cohlgy}[1]{\ensuremath{H^{*}(#1)}}  
\newcommand{\rcohlgy}[1]{\ensuremath{\widetilde{H}_{*}(#1)}}
\newcommand{\plocal}{\ensuremath{\mathbb{Z}_{(p)}}}
\newcommand{\cpinf}{\ensuremath{\mathbb{C}P^{\infty}}}  
\newcommand{\NSymm}{\ensuremath{\mbox{\textbf{NSymm}}}} 
\newcommand{\QSymm}{\ensuremath{\mbox{\textbf{QSymm}}}}
\numberwithin{equation}{section}
\begin{document}

\newcommand{\auths}[1]{\textrm{#1},}
\newcommand{\artTitle}[1]{\textsl{#1},}
\newcommand{\jTitle}[1]{\textrm{#1}}
\newcommand{\Vol}[1]{\textbf{#1}}
\newcommand{\Year}[1]{\textrm{(#1)}}
\newcommand{\Pages}[1]{\textrm{#1}}

\title{Decompositions of looped co-$H$-spaces and applications}  

\author{J. Grbi\'{c}$^{*}$} 
\address{School of Mathematics\\
University of Manchester\\
Manchester M13 9PL\\
United Kingdom}
\email{jelena.grbic@manchester.ac.uk}
\urladdr{http://www.maths.manchester.ac.uk/\~{}jelena}

\author{S. Theriault} 
\address{Department of Mathematical Sciences\\
University of Aberdeen\\
Aberdeen AB24 3UE\\
United Kingdom}
\email{s.theriault@maths.abdn.ac.uk}
\urladdr{http://www.maths.abdn.ac.uk/\~{}stephen}

\author{J. Wu$^{**}$} 
\address{Department of Mathematics\\
National University of Singapore\\
Singapore 119260\\
Republic of Singapore}\email{matwujie@math.nus.edu.sg}
\urladdr{http://www.math.nus.edu.sg/\~{}matwujie} 

\thanks{$^{*}$ Research supported in part by EPSRC grant xxx.} 
\thanks{$^{**}$ Research supported in part by the Academic Research Fund of the
National University of Singapore.}

\subjclass[2000]{Primary 55P35, 55P45, Secondary 05E05, 20C20, 52C35}
\keywords{homotopy decomposition, loop space, co-$H$-space}

\begin{abstract} 
We prove two homotopy decomposition theorems for the loops on 
co-$H$-spaces, including a generalization of the Hilton-Milnor 
Theorem. These are applied to problems arising in algebra, representation 
theory, toric topology, and the study of quasi-symmetric functions. 
\end{abstract}

\maketitle

\section{Introduction} 
\label{sec:intro} 

A central theme in mathematics is to decompose objects into 
products of simpler ones. The smaller pieces should then be 
simpler to analyze, and by understanding how the pieces are put 
back together information is obtained about the original object. 
In homotopy theory this takes the form of decomposing $H$-spaces 
as products of factors or decomposing co-$H$-spaces as wedges 
of summands. Powerful decomposition techniques have been 
developed. Some, such as those in~\cite{MNT,CMN} are 
concerned with decomposing specific spaces as finely as 
possible, while others, such as those in~\cite{SW1,STW2}, are 
concerned with functorial decompositions that are valid for all 
loop suspensions or looped co-$H$-spaces. 

In this paper we establish two new decomposition theorems 
that apply to looped co-$H$-spaces. One is a strong refinement 
of work in~\cite{STW2}, and the other is a generalized Hilton-Milnor 
Theorem. We give four applications which have connections with 
other areas of study in mathematics: the Poincar\'{e}-Birkhoff-Witt 
Theorem, Lie powers in representation theory, moment-angle 
complexes in toric topology, and quasi-symmetric functions. 

To state specific results, we introduce some notation and context. 
Let $p$ be an odd prime, and localize all spaces and 
maps at $p$. Take homology with mod-$p$ coefficients. 
Let $V$ be a graded module over $\mathbb{Z}/p\mathbb{Z}$ and let 
$T(V)$ be the tensor algebra on $V$. This tensor algebra is 
given a Hopf algebra structure by declaring that the generators 
are primitive and extending multiplicatively. In~\cite{SW1} it was 
shown that there is a coalgebra decomposition 
$T(V)\cong A^{\min}(V)\otimes B^{\max}(V)$ 
where $A^{\min}(V)$ is the minimal functorial coalgebra retract 
of $T(V)$ that contains $V$. One important property of this 
decomposition is that the primitive elements of $T(V)$ of tensor 
length not a power of~$p$ are all contained in the complement 
$B^{\max}(V)$. A programme of work ensued to geometrically 
realize these tensor algebra decompositions, which we now 
outline.  

By the Bott-Samelson theorem, there is an algebra isomorphism 
$\hlgy{\Omega\Sigma X}\cong T(\rhlgy{X})$. This was 
generalized in~\cite{Be} to the case of a simply-connected co-$H$ 
space~$Y$: there is an algebra isomorphism 
$\hlgy{\Omega Y}\cong T(\Sigma^{-1}\rhlgy{Y})$, where 
$\Sigma^{-1}\rhlgy{Y}$ is the desuspension by one degree of the 
graded module $\rhlgy{Y}$. Let $V=\Sigma^{-1}\rhlgy{Y}$ so 
$\hlgy{\Omega Y}\cong T(V)$. The coalgebra decomposition 
of $T(V)$ suggests that there are spaces $A^{\min}(Y)$ and $B^{\max}(Y)$ 
such that $\rhlgy{A^{\min}(Y)}\cong A^{\min}(V)$, 
$\rhlgy{B^{\max}(Y)}\cong B^{\max}(V)$, and there is a 
homotopy decomposition 
$\Omega Y\simeq A^{\min}(Y)\times B^{\max}(Y)$. Such 
a decompositions was realized in a succession of 
papers~\cite{SW1,SW2,STW1,STW2} which began with 
$Y$ being a $p$-torsion double suspension and ended 
with the general case of $Y$ being a simply-connected 
co-$H$-space. 

However, the story does not end there, as the module $B^{\max}(V)$ 
has a much richer structure. There is a coalgebra decomposition 
$B^{\max}(V)\cong T(\oplus_{n=2}^{\infty} Q_{n} B(V))$, 
where $Q_{n} B(V)$ is a functorial retract of $V^{\otimes n}$. 
Ideally, this should be geometrically realized as well. This 
was proved in~\cite{STW1} when $Y$ is a simply-connected, 
homotopy coassociative co-$H$ space. More precisely, there 
are spaces $Q_{n} B(Y)$ for $n\geq 2$ such that 
$\rhlgy{Q_{n} B(Y)}\cong\Sigma Q_{n} B(V)$, a homotopy 
fibration sequence 
\(\namedddright{\Omega Y}{\ast}{A^{\min}(Y)}{} 
          {\bigvee_{n=2}^{\infty} Q_{n} B(Y)}{}{Y}\), 
and a homotopy decomposition 
$\Omega Y\simeq A^{\min}(Y)\times 
    \Omega(\bigvee_{n=2}^{\infty} Q_{n} B(Y))$. 

In the more general case of a simply-connected co-$H$-space $Y$, 
the geometric realization of $A^{\min}(V)$ in~\cite{STW2} 
produced a homotopy decomposition 
$\Omega Y\simeq A^{\min}(Y)\times B^{\max}(Y)$ but it did 
not identify $B^{\max}(Y)$ as a loop space. The first goal of 
this paper is to do exactly that. 

\begin{thm} 
   \label{coHfib} 
   Let $Y$ be a simply-connected co-$H$-space and let 
   $V=\Sigma^{-1}\rhlgy{Y}$. There is a homotopy fibration sequence 
   \[\namedddright{\Omega Y}{}{A^{\min}(Y)}{} 
          {\bigvee_{n=2}^{\infty} Q_{n} B(Y)}{}{Y}\] 
   such that: 
   \begin{enumerate} 
      \item[1)] $\Omega Y\simeq A^{\min}(Y)\times 
                      \Omega(\bigvee_{n=2}^{\infty} Q_{n}B(Y))$; 
      \item[2)] $\rhlgy{A^{\min}(Y)}\cong A^{\min}(V)$; 
      \item[3)] for each $n\geq 2$, $\rhlgy{Q_{n} B(Y)}\cong\Sigma Q_{n} B(V)$.   
   \end{enumerate} 
\end{thm}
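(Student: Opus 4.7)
The plan is to start from the decomposition $\Omega Y\simeq A^{\min}(Y)\times B^{\max}(Y)$ of~\cite{STW2} and to realise the second factor as the loop space of an explicit wedge. Concretely, I will construct functorial summands $Q_{n}B(Y)$ and an assembly map $\phi\colon\bigvee_{n\geq 2}Q_{n}B(Y)\to Y$ whose loop $\Omega\phi$ realises the inclusion of coalgebras $B^{\max}(V)\hookrightarrow T(V)$ on homology; the fibration sequence and its splitting will then follow by identifying the homotopy fibre and invoking the retraction supplied by~\cite{STW2}.

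To construct $Q_{n}B(Y)$, recall that $Q_{n}B(V)$ is the image of a natural idempotent $e_{n}$ on $V^{\otimes n}$. Although $\Sigma^{1-n}Y^{\wedge n}$ need not exist as a space when $Y$ is not a suspension, the required shift of $V^{\otimes n}$ is still realisable geometrically: the co-$H$ structure of $Y$ furnishes a section $s\colon Y\to\Sigma\Omega Y$, and combining $s^{\wedge n}$ with the James splitting of $\Sigma\Omega\Sigma\Omega Y$ yields a functorial space $X_{n}(Y)$, a retract of $\Sigma(\Omega Y)^{\wedge n}$, with $\rhlgy{X_{n}(Y)}\cong\Sigma V^{\otimes n}$. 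Applying the functorial geometric realisation of $e_{n}$ developed in~\cite{SW1,STW2} to $X_{n}(Y)$ produces $Q_{n}B(Y)$ satisfying~(3).

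The map $\phi$ is then assembled from composites $Q_{n}B(Y)\hookrightarrow X_{n}(Y)\to Y$, where the second arrow uses an iterated Samelson bracket $(\Omega Y)^{\wedge n}\to\Omega Y$ followed by the evaluation $\Sigma\Omega Y\to Y$; a Bott--Samelson calculation identifies $\Omega\phi$ on homology with the coalgebra inclusion $T(\bigoplus_{n}Q_{n}B(V))\cong B^{\max}(V)\hookrightarrow T(V)$. An Eilenberg--Moore (cobar) calculation then gives $\rhlgy{\mathrm{hofib}(\phi)}\cong A^{\min}(V)$, and the universal property of $A^{\min}$ from~\cite{STW2} identifies $\mathrm{hofib}(\phi)\simeq A^{\min}(Y)$, yielding~(2) and the fibration sequence of the theorem. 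The retraction $\Omega Y\to A^{\min}(Y)$ from~\cite{STW2} then splits the connecting map in this fibration, producing~(1).

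The main obstacle is the construction of $\phi$. In the homotopy coassociative case of~\cite{STW1}, $\bigvee Q_{n}B(Y)$ arose as the suspension of a classifying-space-type object built from the coassociative structure, and $\phi$ came along with it. Without coassociativity this route is unavailable, and one must instead work with the retracts of $\Sigma(\Omega Y)^{\wedge n}$ coming from James-type splittings, verifying carefully that the resulting $\Omega\phi$ lands in, and in fact spans, the correct $B^{\max}$-summand of $\hlgy{\Omega Y}$ identified in~\cite{STW2}.
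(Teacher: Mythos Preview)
Your overall architecture is correct and matches the paper's: build functorial pieces $Q_{n}B(Y)$ realising $\Sigma Q_{n}B(V)$, assemble a map $\phi\colon\bigvee_{n}Q_{n}B(Y)\to Y$, show $\Omega\phi$ is (after composing with the retraction onto $\bar B(Y)$) an equivalence, and read off the fibration and its splitting. The differences lie in how $Q_{n}B(Y)$ and $\phi$ are actually produced and how the homology check is organised.

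The paper does \emph{not} build $\phi$ from Samelson brackets. Instead it invokes the suspension splitting theorem of~\cite{GTW}: for any natural coalgebra retract $A(V)$ of $T(V)$ with geometric realisation $\bar A(Y)$, one has $\Sigma\bar A(Y)\simeq\bigvee_{n}\bar A_{n}(Y)$ with $\bar A_{n}(Y)$ a retract of $[\Sigma\Omega Y]_{n}$. Applying this to the filtration $B^{[n]}(V)$ (the sub-Hopf algebra generated by $Q_{i}B(V)$ for $i\le n$), together with the coalgebra splitting $B^{[n]}(V)\cong B^{[n-1]}(V)\otimes A^{[n]}(V)$ from~\cite{LLW}, the paper \emph{defines} $\bar Q_{n}B(Y)$ as the bottom piece $\bar A^{[n]}_{n}(Y)$ and takes $\phi_{n}$ to be the composite inclusion $\bar Q_{n}B(Y)\hookrightarrow\Sigma\bar A^{[n]}(Y)\hookrightarrow\Sigma\bar B(Y)\hookrightarrow\Sigma\Omega Y$. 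The verification that $r\circ\Omega\sigma\circ\Omega(\bigvee\phi_{n})$ is an equivalence is then a filtered-map argument on the augmentation-ideal filtration, comparing Poincar\'e series on associated graded objects. Finally, $A^{\min}(Y)$ is \emph{defined} to be the homotopy fibre of $\phi$; no Eilenberg--Moore computation or appeal to a universal property is needed.

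Your Samelson-bracket step is the one place where I would flag a genuine gap. A single left-normed iterated commutator $(\Omega Y)^{\wedge n}\to\Omega Y$ induces the Dynkin map $v_{1}\otimes\cdots\otimes v_{n}\mapsto [v_{1},[v_{2},\ldots,[v_{n-1},v_{n}]\ldots]]$ in homology; restricted to the chosen copy of $Q_{n}B(V)\subset V^{\otimes n}$ there is no reason this is injective, nor that its image is the ``correct'' lift of $Q_{n}B(V)$ inside $IB^{\max}(V)\cap T_{n}(V)$ modulo decomposables. In characteristic~$p$ you cannot rescue this with the Dynkin idempotent when $p\mid n$. The paper sidesteps the issue entirely: since $\bar Q_{n}B(Y)$ is already a wedge summand of $\Sigma\bar B(Y)$, the map $\phi_{n}$ is literally an inclusion, and the needed injectivity on $E^{0}$ is automatic from the fact that $\bar Q_{n}B(Y)$ came from the $n$-homogeneous component. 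If you want to salvage your route, replace the Samelson bracket by the inclusion coming from the \cite{GTW} splitting; the rest of your outline (including the use of the retraction from~\cite{STW2} to split the fibration) then goes through and agrees with the paper.
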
 

In fact, Theorem~\ref{coHfib} is a special case of a more general 
theorem proved in Section~\ref{sec:geomreal} which geometrically 
realizes any natural coalgebra-split sub-Hopf algebra $B(V)$ of 
$T(V)$ as a loop space. 

The construction of the space $Q_{n} B(Y)$ exists by a suspension 
splitting result from~\cite{GTW}. To describe this, recall that 
James~\cite{J} proved that there is a homotopy decomposition 
$\Sigma\Omega\Sigma X\simeq\bigvee_{n=1}^{\infty}\Sigma X^{(n)}$, 
where $X^{(n)}$ is the $n$-fold smash of $X$ with itself. Note that  
$\rhlgy{\Sigma X^{(n)}}\cong\Sigma\rhlgy{X}^{\otimes n}$. 
James' decomposition was  generalized in~\cite{GTW}. If $Y$ is a 
simply-connected co-$H$-space then there is a homotopy decomposition 
$\Sigma\Omega Y\simeq\bigvee_{n=1}^{\infty} [\Sigma\Omega Y]_{n}$, 
where each space $[\Sigma\Omega Y]_{n}$ is a co-$H$-space 
and there is an isomorphism 
$\rhlgy{[\Sigma\Omega Y]_{n}}\cong\Sigma(\Sigma^{-1}\rhlgy{Y})^{\otimes n}$. 
Succinctly, $[\Omega\Sigma Y]_{n}$ is an $(n-1)$-fold 
desuspension of $Y^{(n)}$. A key point is that the space $Q_{n}B(Y)$ 
is a retract of the co-$H$-space $[\Sigma\Omega Y]_{n}$, so it too 
is a co-$H$-space. 

Our second result is a generalization of the Hilton-Milnor Theorem, 
touched upon in~\cite{GTW}. Recall that the Hilton-Milnor Theorem 
states that if $X_{1},\ldots,X_{m}$ are path-connected spaces then 
there is a homotopy decomposition 
\[\Omega(\Sigma X_{1}\vee\cdots\vee\Sigma X_{m})\simeq 
       \prod_{\alpha\in\mathcal{I}}\Omega(\Sigma X_{1}^{(\alpha_{1})} 
       \wedge\cdots\wedge X_{m}^{(\alpha_{m})})\] 
where $\mathcal{I}$ runs over a vector space basis of the free 
Lie algebra $L\langle x_{1},\cdots,x_{m}\rangle$, and if $w_{\alpha}$ 
is the basis element corresponding to $\alpha$ then $\alpha_{i}$ 
counts the number of occurances of $x_{i}$ in $w_{\alpha}$. 
Note that if $\alpha_{i}=0$ then, for example, we regard 
$X_{i}^{(\alpha_{i})}\wedge X_{j}^{(\alpha_{j})}$ as 
$X_{j}^{(\alpha_{j})}$ rather than as 
$\ast\wedge X^{(\alpha_{j})}\simeq\ast$. We generalize the 
Hilton-Milnor Theorem by replacing each $\Sigma X_{i}$ by a 
simply-connected co-$H$-space. 

\begin{thm} 
   \label{HM} 
   Let $Y_{1},\ldots, Y_{m}$ be simply-connected co-$H$-spaces. 
   There is a homotopy decomposition 
   \[\Omega(Y_{1}\vee\cdots\vee Y_{m})\simeq 
          \prod_{\alpha\in\mathcal{I}} 
          \Omega M((Y_{i},\alpha_{i})_{i=1}^{m})\]  
   where $\mathcal{I}$ runs over a vector space basis of the free 
   Lie algebra $L\langle y_{1},\ldots,y_{m}\rangle$ and: 
   \begin{enumerate} 
      \item[1)] each space $M((Y_{i},\alpha_{i})_{i=1}^{m})$ 
               is a simply-connected co-$H$-space; 
      \item[2)] $\rhlgy{M((Y_{i},\alpha_{i})_{i=1}^{m})}\cong 
                \Sigma\left((\Sigma^{-1}\rhlgy{Y_{1}})^{\otimes\alpha_{1}}\otimes 
                \cdots\otimes(\Sigma^{-1}\rhlgy{Y_{m}})^{\otimes\alpha_{m}}\right);$ 
      \item[3)] if $Y_{i}=\Sigma X_{i}$ for $1\leq i\leq m$ then 
                $M((Y_{i},\alpha_{i})_{i=1}^{m})\simeq 
                    \Sigma X^{(\alpha_{1})}\wedge\cdots\wedge X^{(\alpha_{m})}$.  
   \end{enumerate}  
\end{thm}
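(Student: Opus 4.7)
The plan is to mirror the classical inductive proof of the Hilton--Milnor theorem, replacing James's splitting of $\Omega\Sigma X$ by the James-type splitting of $\Sigma\Omega Y$ from \cite{GTW}, and to peel off factors one at a time along a Hall basis of $L\langle y_1,\ldots,y_m\rangle$. For the base case $m=2$ I would consider the pinch map $q\colon Y_1\vee Y_2\to Y_1$; the wedge inclusion $Y_1\hookrightarrow Y_1\vee Y_2$ is a right homotopy inverse for $q$, so looping the principal fibration $F\to Y_1\vee Y_2\to Y_1$ yields the splitting $\Omega(Y_1\vee Y_2)\simeq\Omega Y_1\times\Omega F$. A classical Ganea-type argument (valid because $Y_1$ is simply-connected) identifies the fiber as the half-smash $F\simeq\Omega Y_1\ltimes Y_2\simeq Y_2\vee(\Omega Y_1\wedge Y_2)$, which is a wedge of co-$H$-spaces because $Y_2$ is. Applying $\Sigma\Omega Y_1\simeq\bigvee_{n\geq 1}[\Sigma\Omega Y_1]_n$ from \cite{GTW} and using that $Y_2$ is a retract of a suspension $\Sigma X_2$, one refines this to
$$F\simeq\bigvee_{n\geq 0} M_n,\qquad \rhlgy{M_n}\cong\Sigma\bigl(V_1^{\otimes n}\otimes V_2\bigr),$$
where $V_i=\Sigma^{-1}\rhlgy{Y_i}$, $M_0=Y_2$, and each $M_n$ is a simply-connected co-$H$-space obtained as a retract of $[\Sigma\Omega Y_1]_n\wedge X_2$.

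I would then iterate the base case along a Hall basis of $L\langle y_1,\ldots,y_m\rangle$: at each stage, pick the next Hall element $\alpha$ and pull off the factor $\Omega M_\alpha$ by applying a two-summand reduction to the current wedge. Standard Hall-basis enumeration produces exactly one factor per basis element, giving the desired product decomposition. Property~(1) holds because every $M_\alpha$ is built by iterated applications of the base case and so inherits a co-$H$-structure. Property~(2) is read off by tracking homology through each iteration, using that tensor products of retracts of suspensions have the expected homology. For property~(3), when each $Y_i=\Sigma X_i$ is a genuine suspension the splitting of \cite{GTW} reduces to James's $\Sigma\Omega\Sigma X_i\simeq\bigvee_n\Sigma X_i^{(n)}$, and each $M_\alpha$ is then identified with $\Sigma X_1^{(\alpha_1)}\wedge\cdots\wedge X_m^{(\alpha_m)}$, recovering the classical theorem.

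The main obstacle is the desuspension issue in the base case: the splitting from \cite{GTW} is a wedge decomposition of $\Sigma\Omega Y_1$, not of $\Omega Y_1$ itself, so producing an honest unsuspended wedge decomposition of $\Omega Y_1\wedge Y_2$ requires exploiting the co-$H$-structure of $Y_2$ to desuspend factor by factor via the retract $Y_2\hookrightarrow\Sigma X_2\to Y_2$ and applying the induced idempotent to each summand $[\Sigma\Omega Y_1]_n\wedge X_2$. Care is needed to verify that the resulting pieces $M_n$ are simply-connected co-$H$-spaces with the precise homology claimed, so that iterating along a Hall basis yields a genuine homotopy decomposition rather than a merely stable one. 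Once this is in place, the combinatorial matching with the Hall basis of $L\langle y_1,\ldots,y_m\rangle$ follows the classical Hilton--Milnor pattern.
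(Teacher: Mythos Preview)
Your approach is correct but follows a different route from the paper. The paper does \emph{not} peel off one summand at a time via the pinch map. Instead it uses the fibration
\[
F_{1}\longrightarrow \bigvee_{i=1}^{m} Y_{i}\longrightarrow \prod_{i=1}^{m} Y_{i}
\]
of the wedge into the product, invokes Porter's theorem to identify $F_{1}$ as a wedge of iterated smashes $\Sigma\Omega Y_{i_{1}}\wedge\cdots\wedge\Omega Y_{i_{j}}$, and then refines each such smash via an iterated form of the GTW splitting (the paper's Proposition~3.2). The process is repeated with $F_{1}$ in place of $\bigvee Y_{i}$, producing fibers $F_{k}$ of increasing connectivity; the Lie-algebra bookkeeping is deferred to the end. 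Your argument, by contrast, is closer to Milnor's original inductive proof: it uses the pinch fibration $Y_{1}\vee Y_{2}\to Y_{1}$ with fiber $\Omega Y_{1}\ltimes Y_{2}$, splits the half-smash using the co-$H$ structure on $Y_{2}$, and iterates along a Hall basis. Your route avoids Porter's theorem entirely and makes the Hall structure explicit, at the cost of a less symmetric treatment of the summands; the paper's route is more symmetric and handles all $m$ summands at once per stage, but imports Porter as a black box.

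One simplification you should make: the ``desuspension obstacle'' you worry about is not an obstacle. The GTW result (the paper's Theorem~3.1) already gives a functorial decomposition of $Z\wedge\Omega Y$ for \emph{any} simply-connected co-$H$-space $Z$, with each summand $[Z\wedge\Omega Y]_{n}$ again a simply-connected co-$H$-space. So you may take $Z=Y_{2}$ directly and obtain the unsuspended wedge decomposition of $\Omega Y_{1}\wedge Y_{2}$ without passing through a retract $Y_{2}\hookrightarrow\Sigma X_{2}\to Y_{2}$. This removes the only point you flagged as delicate, and the rest of your iteration goes through cleanly.
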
  

Again, if $\alpha_{i}=0$ we interpret 
$(\Sigma^{-1}\rhlgy{Y_{i}})^{\otimes\alpha_{i}}\otimes 
    (\Sigma^{-1}\rhlgy{Y_{j}})^{\otimes\alpha_{j}})$ 
as $\Sigma^{-1}\rhlgy{Y_{j}})^{\alpha_{j}}$ rather than $0$. 
Note that Theorem~\ref{HM}~(3) is the usual Hilton-Milnor Theorem. 

Theorems~\ref{coHfib} and~\ref{HM} are very useful for producing 
homotopy decompositions of interesting spaces. In 
Section~\ref{sec:apps}, we give three examples: a complete 
decomposition of $\Omega Y$ into functorially indecomposable 
factors, a refined decomposition of some generalized moment-angle 
complexes that arise in toric topology, and a decomposition of 
$\Omega\Sigma\cpinf$ that implies a corresponding algebraic 
decomposition of the ring of quasi-symmetric functions.

\section{Geometric Realization of Natural Coalgebra-Split Sub-Hopf 
    Algebras} 
\label{sec:geomreal} 

In this section we prove Theorem~\ref{coHfib} as a special case of 
the more general Theorem~\ref{geomreal}. This gives conditions for 
when a sub-Hopf algebra of a tensor algebra has a geometric 
realization as a loop space. Before proving Theorem~\ref{geomreal} 
it will be useful to state two results. The first is a geometric realization 
statement from~\cite{STW2}. Recall that $p$ is an odd prime, the ground 
ring for all algebraic statements is $\mathbb{Z}/p\mathbb{Z}$, and all 
spaces and maps have been localized at $p$. 

\begin{thm} 
   \label{STWgeomreal} 
   Let $V$ be a graded module and suppose that $A(V)$ is a 
   functorial coalgebra retract of $T(V)$. Then $A(V)$ has a geometric 
   realization. That is, if $Y$ is a simply-connected co-$H$-space 
   such that there is an algebra isomorphism 
   $\hlgy{\Omega Y}\cong T(V)$, then there is a functorial 
   retract $\bar A(Y)$ of $\Omega Y$ with the property that 
   $\hlgy{\bar A(Y)}\cong A(V)$.~$\qqed$ 
\end{thm}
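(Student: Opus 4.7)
The plan is to combine the Selick--Wu correspondence between natural coalgebra retracts of $T(V)$ and sequences of idempotents in the group algebras $\F_p[\Sigma_n]$ with the suspension splitting of $\Sigma\Omega Y$ from \cite{GTW}. Under this correspondence from \cite{SW1}, a functorial coalgebra retract $A(V)\subset T(V)$ is classified by a family of idempotents $e_n\in\F_p[\Sigma_n]$ acting on $V^{\otimes n}$, compatible with tensor products of modules, so that $A(V)\cong\bigoplus_n e_n(V^{\otimes n})$.

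First I would invoke the natural wedge decomposition $\Sigma\Omega Y\simeq\bigvee_{n\geq 1}[\Sigma\Omega Y]_n$ with $\rhlgy{[\Sigma\Omega Y]_n}\cong\Sigma V^{\otimes n}$. The construction is functorial in $Y$, and the key auxiliary claim is that the algebraic $\Sigma_n$-action on $V^{\otimes n}$ lifts to a ring homomorphism $\F_p[\Sigma_n]\to\End([\Sigma\Omega Y]_n)$, where $\End$ here denotes pointed homotopy classes of self-maps. This lift is produced by realizing each piece $[\Sigma\Omega Y]_n$ as a natural retract of a combinatorial model of $\Omega Y$ that already carries a symmetric-group reindexing action.

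Second, lift each idempotent $e_n$ to a homotopy idempotent self-map $\hat e_n$ of $[\Sigma\Omega Y]_n$. Since $[\Sigma\Omega Y]_n$ is a simply-connected, $p$-local co-$H$-space, the mapping-telescope construction splits $\hat e_n$ and produces a natural co-$H$-space retract $A_n(Y)$ of $[\Sigma\Omega Y]_n$ with $\rhlgy{A_n(Y)}\cong\Sigma\,e_n(V^{\otimes n})$. Wedging over $n$ yields a co-$H$-space retract $W=\bigvee_n A_n(Y)$ of $\Sigma\Omega Y$ with $\rhlgy{W}\cong\Sigma A(V)$. Third, to descend from a retract of $\Sigma\Omega Y$ to a retract of $\Omega Y$, I loop the inclusion $W\hookrightarrow\Sigma\Omega Y$ and precompose with the adjoint $\Omega Y\to\Omega\Sigma\Omega Y$ of the identity to obtain a map $\Omega Y\to\Omega W$; iterating along the James filtration of $\Omega Y$ and passing to the telescope, exactly as in the construction of $A^{\min}(Y)$ in \cite{SW1,STW2}, produces the desired natural retract $\bar A(Y)$ of $\Omega Y$ with $H_*(\bar A(Y))\cong A(V)$.

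The hardest step is the geometric realization of the symmetric-group action asserted in the first paragraph. When $Y=\Sigma X$ this is immediate: $[\Sigma\Omega\Sigma X]_n=\Sigma X^{\wedge n}$ carries a manifest $\Sigma_n$-action permuting smash factors, and it is classical that idempotents in $\F_p[\Sigma_n]$ split off stable wedge summands. For a general co-$H$-space $Y$ the piece $[\Sigma\Omega Y]_n$ has no preferred tensor factorization, so realizing a $\Sigma_n$-action on it, functorially in $Y$, requires the explicit construction of \cite{GTW} together with a careful check that the reindexing action descends through the retraction defining $[\Sigma\Omega Y]_n$.
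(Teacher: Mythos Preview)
The paper does not prove this theorem; it is quoted from \cite{STW2} and closed with a terminal box, so there is no in-paper argument to compare your proposal against. Your outline is broadly faithful to the strategy actually used in \cite{STW2}: encode the functorial coalgebra retract by a compatible family of idempotents $e_n\in\F_p[\Sigma_n]$ via the Selick--Wu correspondence, realize these idempotents geometrically on the tensor-length pieces of $\Sigma\Omega Y$, split by telescopes, and then descend to a retract of $\Omega Y$ using the co-$H$ structure and an iterated telescope. You also correctly identify the genuinely hard step, namely producing a functorial $\Sigma_n$-action on $[\Sigma\Omega Y]_n$ when $Y$ is not a suspension.

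One logical caution about your use of \cite{GTW} as input. In the present paper's order of dependence, the suspension-splitting theorem quoted here as Theorem~\ref{suspensionsplitting} already \emph{presupposes} the existence of the geometric realization $\bar A$---that is, it takes Theorem~\ref{STWgeomreal} as given. If the basic decomposition $\Sigma\Omega Y\simeq\bigvee_n[\Sigma\Omega Y]_n$ in \cite{GTW} is itself derived from the machinery of \cite{STW2}, then your first step is circular. You should either verify that the tensor-length splitting of $\Sigma\Omega Y$ (as opposed to the splitting of $\Sigma\bar A(Y)$) is obtained in \cite{GTW} by a direct construction independent of \cite{STW2}, or else follow \cite{STW2} more closely and build the geometric $\Sigma_n$-action and the telescope directly from the co-$H$ structure on $Y$, without routing through \cite{GTW}. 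Your third step is also a bit compressed: the map $\Omega Y\to\Omega W$ you write down is not yet a retraction, and ``iterating along the James filtration'' needs to be unpacked into the actual homotopy-idempotent self-map of $\Omega Y$ whose telescope is $\bar A(Y)$; this is precisely where the co-$H$ retraction $\Omega Y\to\Omega\Sigma\Omega Y\to\Omega Y$ is used in \cite{STW2}.
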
 

Second, given a functorial coalgebra retract $A(V)$ of $T(V)$, 
let $A_{n}(V)$ be the component of $A(V)$ consisting of 
homogeneous elements of tensor length~$n$. The following 
suspension splitting theorem was proved in~\cite{GTW}. 

\begin{thm}
   \label{suspensionsplitting}
   Let $A(V)$ be any functorial coalgebra retract of $T(V)$
   and let $\bar A$ be the functorial geometric realization of
   $A$. Then for any simply-connected co-$H$-space~$Y$
   of finite type, there is a functorial homotopy decomposition
   \[\Sigma \bar A(Y)\simeq \bigvee_{n=1}^\infty \bar A_n(Y)\]
   such that $\bar{A}_{n}(Y)$ is a functorial retract of $[\Sigma\Omega Y]_{n}$ 
   and there is a coalgebra isomorphism 
   \[\widetilde H_*(\bar A_n(Y))\cong
       A_n(\Sigma^{-1}\widetilde H_*(Y))\]
   for each $n\geq 1$.
\end{thm}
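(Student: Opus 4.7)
The plan is to combine the James-type suspension splitting $\Sigma\Omega Y\simeq\bigvee_{n\geq 1}[\Sigma\Omega Y]_{n}$ with Theorem~\ref{STWgeomreal} and naturality. Write $V=\Sigma^{-1}\rhlgy{Y}$. Since $A$ is a functorial coalgebra retract of $T$ and any natural transformation between the tensor-power functors $V^{\otimes n}$ and $V^{\otimes m}$ must vanish when $n\neq m$ (and reduces to an element of the symmetric-group algebra when $n=m$), the retracting idempotent on $T(V)$ preserves tensor length. Hence $A(V)=\bigoplus_{n\geq 1}A_{n}(V)$ with $A_{n}(V)\subseteq V^{\otimes n}$, so the target identification $\rhlgy{\bar A_{n}(Y)}\cong A_{n}(V)$ at least makes algebraic sense.

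By Theorem~\ref{STWgeomreal}, there are natural maps $s\colon\bar A(Y)\to\Omega Y$ and $r\colon\Omega Y\to\bar A(Y)$ with $r\circ s\simeq\id$. Using the wedge inclusions $\iota_{n}\colon[\Sigma\Omega Y]_{n}\hookrightarrow\Sigma\Omega Y$ and projections $\pi_{m}\colon\Sigma\Omega Y\to[\Sigma\Omega Y]_{m}$ provided by the James-type splitting, for each pair $n,m\geq 1$ I form the composite
\[\phi_{m,n}\colon [\Sigma\Omega Y]_{n}\xrightarrow{\iota_{n}}\Sigma\Omega Y\xrightarrow{\Sigma r}\Sigma\bar A(Y)\xrightarrow{\Sigma s}\Sigma\Omega Y\xrightarrow{\pi_{m}}[\Sigma\Omega Y]_{m}.\]
The pivotal step, and the main obstacle in the whole argument, is to prove $\phi_{m,n}\simeq\ast$ whenever $n\neq m$. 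I would attack this via the universal example: when $Y$ is a wedge of spheres, each $[\Sigma\Omega Y]_{n}$ is itself a wedge of simply-connected spheres (by the classical James splitting of $\Sigma\Omega\Sigma X$), so the homotopy class of $\phi_{m,n}$ is detected by its effect on mod-$p$ homology. There, $\phi_{m,n}$ is manifestly natural in $V$ as a map from a summand of $\Sigma V^{\otimes n}$ to a summand of $\Sigma V^{\otimes m}$, and by the same tensor-length-preservation argument as above it must vanish when $n\neq m$. Naturality then transports this nullhomotopy to an arbitrary simply-connected finite-type co-$H$-space $Y$.

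Granting the off-diagonal vanishing, each diagonal composite $\phi_{n,n}$ becomes a homotopy idempotent on the $p$-local, finite-type, simply-connected co-$H$-space $[\Sigma\Omega Y]_{n}$; by the standard telescope construction for such idempotents it splits off a wedge summand, which I define to be $\bar A_{n}(Y)$. These summands are functorial by construction, are retracts of the $[\Sigma\Omega Y]_{n}$ by definition, and assembling them yields the decomposition $\Sigma\bar A(Y)\simeq\bigvee_{n\geq 1}\bar A_{n}(Y)$. Restricting the coalgebra isomorphism $\rhlgy{\bar A(Y)}\cong A(V)$ along the resulting direct-sum decomposition delivers the desired coalgebra isomorphism $\rhlgy{\bar A_{n}(Y)}\cong A_{n}(V)$ for each $n$.
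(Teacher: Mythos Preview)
The paper does not prove this statement; it is quoted from~\cite{GTW} and used as input for Theorem~\ref{geomreal}. So there is no in-paper proof to compare against, and your attempt must stand on its own.

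The outline is reasonable, but the step you flag as ``the main obstacle'' is in fact a genuine gap. You argue that $\phi_{m,n}\simeq\ast$ for $n\neq m$ by checking it on wedges of spheres and then asserting that ``naturality transports this nullhomotopy to an arbitrary~$Y$.'' Naturality does no such thing. What naturality gives you is a commuting square for every co-$H$-map $f\colon W\to Y$; knowing the map is null on the source row says nothing about the target row unless the induced map $[\Sigma\Omega W]_{n}\to[\Sigma\Omega Y]_{n}$ has a section, which it will not in general. What your tensor-length argument does establish, for every~$Y$, is that $(\phi_{m,n})_{\ast}=0$ in homology; but $[\Sigma\Omega Y]_{m}$ is not a wedge of spheres when $Y$ is merely a co-$H$-space, so vanishing in homology does not force a nullhomotopy.

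This gap propagates: without the off-diagonal vanishing, $\phi_{n,n}=\pi_{n}\circ\Sigma(s r)\circ\iota_{n}$ is only idempotent in homology, not up to homotopy, because the idempotents $\Sigma(s r)$ and $\iota_{n}\pi_{n}$ on $\Sigma\Omega Y$ need not commute. The telescope of a map that is merely homologically idempotent does not automatically split off as a retract, so your construction of $\bar A_{n}(Y)$ and the final wedge decomposition are not justified. The actual argument in~\cite{GTW} does not treat the retraction $s r$ as a black box; it uses the specific way the geometric realization in Theorem~\ref{STWgeomreal} is built (via symmetric-group actions and the co-$H$ structure) to ensure compatibility with the tensor-length pieces at the level of spaces, not just homology.
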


Now suppose that $B(V)$ is a sub-Hopf algebra of $T(V)$. We say 
that $B(V)$ is \emph{coalgebra-split} if the inclusion 
\(\namedright{B(V)}{}{T(V)}\) 
has a natural coalgebra retraction. Observe that the 
weaker property of $B(V)$ being a sub-coalgebra of $T(V)$ 
which splits off $T(V)$ implies by Theorem~\ref{STWgeomreal} 
that $B(V)$ has a geometric realization $\overline{B}$. We aim to 
show that the full force of $B(V)$ being a sub-Hopf algebra of $T(V)$ 
implies that it has a much more structured geometric realization.   

If $M$ is a Hopf algebra, 
let $QM$ be the set of indecomposable elements of $M$, and 
let $IM$ be the augmentation ideal of $M$. If $B(V)$ is a natural 
sub-Hopf algebra of $T(V)$ then there is a natural epimorphism 
\(\namedright{IB(V)}{}{QB(V)}\). 
Let $T_{n}(V)$ be the component of $T(V)$ consisting of the 
homogeneous tensor elements of length~$n$, and let 
$B_{n}(V)=IB(V)\cap T_{n}(V)$. Let $Q_{n}B(V)$ be the quotient 
of $B_{n}(V)$ in $QB(V)$. Let $\CoH$ be the category of 
simply-connected co-$H$-spaces and co-$H$-maps and 
let $\bfk=\mathbb{Z}/p\mathbb{Z}$. 

\begin{thm} 
  \label{geomreal}
   Let $B(V)$ be a natural coalgebra-split sub-Hopf algebra of $T(V)$  
   and let $\bar B$ be its geometric realization. Then there exist functors 
   $\bar Q_nB$ from $\CoH$ to spaces such that for any $Y\in \CoH$: 
   \begin{enumerate}
      \item[1)] $\bar Q_nB(Y)$ is functorial retract of $[\Sigma\Omega Y]_n$;
      \item[2)] there is a functorial coalgebra isomorphism
                    \[\Sigma^{-1}\bar H_*(\bar Q_nB(Y))\cong Q_nB(\Sigma^{-1}\bar H_*(Y));\]   
      \item[3)] there is a natural homotopy equivalence
                     \[\bar B(Y)\simeq \Omega \left(\bigvee_{n=1}^\infty \bar Q_nB(Y)\right).\] 
   \end{enumerate}
\end{thm}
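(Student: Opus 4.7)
The plan is to bootstrap from the two preceding results. Since $B(V)$ is, in particular, a natural coalgebra retract of $T(V)$ (via the coalgebra-split hypothesis), Theorem~\ref{STWgeomreal} produces the functorial retract $\bar B(Y)$ of $\Omega Y$ with $\hlgy{\bar B(Y)} \cong B(V)$. Applying Theorem~\ref{suspensionsplitting} to $\bar B$ then yields a natural suspension splitting
\[
\Sigma \bar B(Y) \simeq \bigvee_{n=1}^\infty \bar B_n(Y),
\]
in which each $\bar B_n(Y)$ is a functorial retract of $[\Sigma \Omega Y]_n$ with $\Sigma^{-1}\rhlgy{\bar B_n(Y)} \cong B_n(V)$. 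These are the wedge summands out of which I will cut the pieces $\bar Q_n B(Y)$.

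Next I would construct $\bar Q_n B(Y)$ as a functorial co-$H$-retract of $\bar B_n(Y)$. The crucial algebraic input is that the projection $B_n(V) \twoheadrightarrow Q_n B(V)$ to indecomposables admits a natural splitting of graded $\bfk$-modules; this exists because the natural coalgebra retraction $T(V) \to B(V)$, combined with the multiplication inside the sub-Hopf algebra $B(V)$, produces a functorial direct-sum decomposition $B_n(V) \cong Q_n B(V) \oplus (IB(V)\cdot IB(V))_n$. Since $\bar B_n(Y)$ is a co-$H$-space, its reduced homology coalgebra is primitive, so a natural splitting of graded $\bfk$-modules is automatically one of coalgebras. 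Applying the functorial coalgebra realization machinery behind Theorem~\ref{STWgeomreal}, now in the category $\CoH$ to this coalgebra retract of $\bar B_n(Y)$, yields $\bar Q_n B(Y)$ as a functorial retract of $\bar B_n(Y)$ (hence of $[\Sigma \Omega Y]_n$) satisfying (1) and (2).

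For part (3), I would assemble the composite
\[
\bar B(Y) \xrightarrow{\;E\;} \Omega\Sigma \bar B(Y) \;\simeq\; \Omega\bigvee_{n=1}^\infty \bar B_n(Y) \xrightarrow{\;\Omega(\bigvee p_n)\;} \Omega\bigvee_{n=1}^\infty \bar Q_n B(Y),
\]
where $E$ is the James map (the adjoint of the identity on $\bar B(Y)$) and $p_n\colon \bar B_n(Y) \to \bar Q_n B(Y)$ is the retraction from the previous step. The target is a loop space and the source is an H-space (being a retract of $\Omega Y$), so it suffices to show the composite is a mod-$p$ homology isomorphism. By Bott-Samelson, the homology of the right-hand side is $T(QB(V))$, and the composite induces the canonical comparison $B(V) \to T(QB(V))$ determined by a chosen natural section $QB(V)\hookrightarrow B(V)$ of the indecomposable quotient. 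That this comparison is an isomorphism follows from the algebraic fact that a natural coalgebra-split sub-Hopf algebra of $T(V)$ is itself a tensor algebra on its indecomposables (invoking the Shirshov-Witt theorem, which ensures that the Lie subalgebra associated to $B(V)$ inside $L(V)$ is free, together with the coalgebra-split hypothesis to align coproducts).

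The principal obstacle is this last algebraic identification $B(V) \cong T(QB(V))$, and correspondingly the functorial splitting of $B_n(V)$ used in the construction of $\bar Q_n B$: over $\fp$ the indecomposables of a Hopf algebra do not automatically lift to primitives, so it is essential to exploit the full force of the natural coalgebra retraction $T(V) \to B(V)$ rather than merely the sub-coalgebra structure. Once these splittings are in hand, the geometric realization portion of the argument is routine assembly from Theorems~\ref{STWgeomreal} and~\ref{suspensionsplitting}, and a Whitehead-type argument for $p$-local simply-connected spaces of finite type upgrades the mod-$p$ homology isomorphism of the final composite into the asserted homotopy equivalence.
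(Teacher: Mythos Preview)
There is a genuine gap in part~(3). Your composite
\[
\bar B(Y)\xrightarrow{\ E\ }\Omega\Sigma\bar B(Y)\simeq\Omega\bigvee_{n}\bar B_n(Y)\xrightarrow{\Omega(\bigvee p_n)}\Omega\bigvee_{n}\bar Q_nB(Y)
\]
is \emph{not} a homology isomorphism. The James map $E$ is not an $H$-map, even when the source is an $H$-space; on homology $E_*$ is simply the inclusion of $\rhlgy{\bar B(Y)}=IB(V)$ into $T(IB(V))$ as the tensor-length-one summand. Following this by the algebra map $\Omega(\bigvee p_n)_*$ lands the entire image in tensor length one of $T(QB(V))$, namely in $QB(V)$ itself. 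So your ``canonical comparison'' $B(V)\to T(QB(V))$ is just the quotient $IB(V)\twoheadrightarrow QB(V)$ followed by an inclusion, which is neither injective nor surjective. The paper builds the equivalence in the \emph{opposite} direction: it produces maps $\phi_n\colon\bar Q_nB(Y)\to\Sigma\Omega Y$ (sections, not projections) and forms the composite
\(\Omega(\bigvee_n\bar Q_nB(Y))\xrightarrow{\Omega(\bigvee\phi_n)}\Omega\Sigma\Omega Y\xrightarrow{\Omega\sigma}\Omega Y\xrightarrow{r}\bar B(Y)\).
Because the first two maps are loop maps, the induced map in homology is an \emph{algebra} map; its image is then a sub-Hopf algebra of $E^0\hlgy{\Omega Y}$, and a filtered Poincar\'{e}-series comparison finishes the argument.

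There is a second gap in your construction of $\bar Q_nB(Y)$. You assert a natural splitting $B_n(V)\cong Q_nB(V)\oplus(IB\cdot IB)_n$, but ``the coalgebra retraction combined with the multiplication'' does not produce a natural section of $B_n(V)\twoheadrightarrow Q_nB(V)$ in characteristic~$p$; you acknowledge the primitive-versus-indecomposable obstruction but do not overcome it. The paper sidesteps this by introducing an increasing filtration by sub-Hopf algebras $B^{[n]}(V)$ (generated by $Q_iB(V)$ for $i\leq n$), invoking~\cite{LLW} for the natural coalgebra decomposition $B^{[n]}(V)\cong B^{[n-1]}(V)\otimes A^{[n]}(V)$, and then identifying $Q_nB(V)$ with the length-$n$ component $A^{[n]}_n(V)$. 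This simultaneously exhibits $Q_nB(V)$ as a natural coalgebra retract of $T(V)$ (so that Theorems~\ref{STWgeomreal} and~\ref{suspensionsplitting} apply to define $\bar Q_nB(Y)$) and supplies the sections $\phi_n$ needed for the map above.
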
 

\begin{proof}
The proof is to give a geometric construction for the
indecomposables of $B(V)$. Let $B^{[n]}(V)$ be the sub-Hopf algebra
generated by $Q_iB(V)$ for $i\leq n$. By the method of proof 
of~\cite[Theorem 1.1]{LLW}, each $B^{[n]}(V)$ is a natural coalgebra-split 
sub-Hopf algebra of $T(V)$, and there is a natural coalgebra decomposition
\begin{equation} 
  \label{BnVsplit} 
  B^{[n]}(V)\cong B^{[n-1]}(V)\otimes A^{[n]}(V), 
\end{equation}  
where $A^{[n]}(V)=\bfk\otimes_{B^{[n-1]}(V)}B^{[n]}(V)$. Note that
\[Q_nB(V)\cong A^{[n]}(V)_n.\] 

By Theorem~\ref{STWgeomreal}, the functorial coalgebra splitting 
in~(\ref{BnVsplit}) has a geometric realization as a natural homotopy 
decomposition
\begin{equation} 
  \label{geomreal1}
  \bar B^{[n]}(Y)\simeq \bar B^{[n-1]}(Y)\times \bar A^{[n]}(Y)
\end{equation}
for some $Y\in \CoH$. This induces a filtered decomposition with
respect to the augmentation ideal filtration of $H_*(\Omega Y)$. By
Theorem~\ref{suspensionsplitting},
\[\Sigma\bar A^{[n]}(Y)\simeq \bigvee_{k=1}^\infty \bar A^{[n]}_k(Y)\] 
where $\bar{A}^{[n]}_{k}(Y)$ is a functorial retract of $[\Sigma\Omega Y]_{n}$ 
and $\bar A^{[n]}_k(Y)\simeq\ast$ for $k<n$ because $A^{[n]}_k(V)=0$ 
for $0<k<n$. Define
\[\bar Q_nB(Y)=\bar A^{[n]}_n(Y).\] 
Let $\phi_n$ be the composite of inclusions 
\begin{equation} 
  \label{phindef}
  \begin{array}{ccl}
     \bar Q_nB(Y)=\bar A^{[n]}_n(Y)& \longrightarrow & \Sigma \bar A^{[n]}(Y)\\
     & \longrightarrow & \Sigma ( \bar B^{[n-1]}(Y)\times \bar A^{[n]}(Y))\\
     & \stackrel{\simeq}{\longrightarrow} & \Sigma \bar B(Y)\\
     & \longrightarrow &\Sigma \Omega Y.\\
  \end{array}
\end{equation} 

Consider the composite
\begin{equation} 
  \label{phincomp}
     \lllnamedddright{\Omega\left(\bigvee_{n=1}^\infty\bar
     Q_nB(Y)\right)}{\Omega(\bigvee_{n=1}^\infty\phi_n)}
     {\Omega\Sigma\Omega Y}{\Omega\sigma}{\Omega Y}{r}{\bar B(Y)},
\end{equation}
where $\sigma$ is the evaluation map and $r$ is the retraction map. 
We wish to show that this composite induces an isomorphism in 
homology, implying that it is a homotopy equivalence. The assertions 
of the theorem would then follow. To show that~(\ref{phincomp}) induces 
an isomorphism in homology it suffices to filter appropriately and show 
that we obtain an isomorphism of associated graded objects. 

Let
\[H_*(\Omega\Sigma\Omega Y)=T(\bar H_*(\Omega Y))\] 
be filtered by
\[I^nH_*(\Omega\Sigma\Omega Y)=\sum_{t_1r_1+\cdots + t_sr_s\geq n}
  (I^{t_1}H_*(\Omega Y))^{\otimes r_1}\otimes\cdots\otimes
  (I^{t_s}H_*(\Omega Y))^{\otimes r_s}.\] 
Filter $\hlgy{\Omega Y}$ by the augmentation ideal filtration. Then
\[\namedright{\Omega\sigma_*\colon H_*(\Omega\Sigma\Omega Y)} 
     {}{H_*(\Omega Y)}\] 
is a filtered map since $\Omega\sigma_*$ is an algebra map. 
Let $H_*(\bar B(Y))$ be filtered subject
to the augmentation ideal filtration of $H_*(\Omega Y)$. Then $r_*$
is a filtered map. Note that as an algebra
\[H_*\left(\Omega\left(\bigvee_{n=1}^\infty\bar
    Q_nB(Y)\right)\right)=T\left(\bigoplus_{n=1}^\infty \Sigma^{-1}(\bar
    H_*(\bar Q_nB(Y)))\right),\] 
which is filtered by
\[\sum_{i_1+\cdots+i_t\geq n} \Sigma^{-1}(\bar H_*(\bar  
  Q_{i_1}B(Y)))\otimes\cdots\otimes \Sigma^{-1}(\bar H_*(\bar
  Q_{i_t}B(Y))).\]
Observe that $\phi_{n\ast}$ maps $\bar H_*(\bar Q_nB(Y))$ into
$\Sigma I^nH_*(\Omega Y)$ and the composite
\begin{equation} 
  \label{phinhlgycomp}
     \namedright{\bar H_*(\bar Q_nB(Y))}{\phi_{n\ast}}
     {\Sigma I^nH_*(\Omega Y)}\twoheadrightarrow
     \Sigma I^nH_*(\Omega Y)/\Sigma I^{n+1}H_*(\Omega Y)
\end{equation}
is a monomorphism because $\bar Q_nB(Y)$ is obtained from the
$n$-homogenous component of $\Sigma \bar A^{[n]}(Y)$. Thus
$\Omega(\bigvee_{n=1}^\infty\phi_n)_*$ is a filtered map and the
image of
\[E^0(\Omega\sigma_*\circ \Omega(\bigvee_{n=1}^\infty\phi_n)_*)\] 
is the sub-Hopf algebra of 
$E^0H_*(\Omega Y)=T(\Sigma^{-1}\bar H_*(Y))$ 
generated by
\[E^0\phi_{n\ast}(\Sigma^{-1}\bar H_*(\bar Q_nB(Y)))\] 
for $n\geq 1$. From~(\ref{phinhlgycomp}),
\[\Sigma^{-1}\bar H_*(\bar Q_nB(Y))\cong
     E^0\phi_{n\ast}(\Sigma^{-1}\bar H_*(\bar Q_nB(Y))).\] 
By the construction of $\phi_n$ in~(\ref{phindef}), the modules
\[\{E^0\phi_{n\ast}(\Sigma^{-1}\bar H_*(\bar Q_nB(Y)))\}\] 
are algebraically independent because $\bar Q_iB(Y)$ is mapped into
$\Sigma B^{[n]}(Y)$ for $i\leq n$ and $\bar Q^{[n]}(Y)$ is mapped
into $\Sigma \bar A^{[n]}(Y)$ which is the complement to 
$\Sigma\bar B^{[n-1]}(Y)$. Since each $\bar Q_nB(Y)$ is mapped 
into $\Sigma\bar B(Y)$,
\[\im(E^0(\Omega\sigma_*\circ \Omega(\bigvee_{n=1}^\infty\phi_n)_*))
    =T(E^0\phi_{n\ast}(\Sigma^{-1}\bar H_*(\bar Q_nB(Y))))\] 
is a sub-Hopf algebra of 
$E^0H_*(\bar B(Y))\subseteq T(\Sigma^{-1}\bar H_*(Y))$. 
By computing the Poincar\'e series,
\[\im(E^0(\Omega\sigma_*\circ
\Omega(\bigvee_{n=1}^\infty\phi_n)_*))=E^0H_*(\bar B(Y)).\] 
Since
\(r\colon\namedright{\Omega Y}{}{\bar B(Y)}\)
is a retraction map,
\[E^0r_*|_{E^0H_*(\bar B(Y))}=\id_{E^0H_*(\bar B(Y))}.\] 
Therefore the composite
\[E^0r_*\circ E^0(\Omega\sigma_*)\circ E^0
    \Omega(\bigvee_{n=1}^\infty\phi_n)_*\] 
of associated graded objects induced by the composition 
in~(\ref{phincomp}) is an isomorphism, as required.  
\end{proof}

The proof of Theorem~\ref{geomreal} does more. Recall the map 
\(\namedright{\bar Q_{n}B(Y)}{}{\Sigma\Omega Y}\) 
defined in~(\ref{phindef}). Taking the wedge sum for $n\geq 1$ 
and then evaluating, we obtain a composite 
\[\phi\colon\nameddright{\bigvee_{n=1}^\infty \bar
    Q_nB(Y)}{\phi_n}{\Sigma\Omega Y}{\sigma}{Y}.\] 
The thrust of the proof of Theorem~\ref{geomreal} was to show 
that the composite in~(\ref{phincomp})  is a homotopy equivalence. 
That is, the composite $r\circ\Omega\phi$ is a homotopy equivalence. 
In particular, this implies that $\Omega\phi$ has a functorial 
retraction. Consequently, if $\bar{A}(Y)$ is the homotopy fiber of $\phi$ 
we immediately obtain the following. 

\begin{thm} 
   \label{geomrealfib} 
   Let $B(V)$ be a natural coalgebra-split sub-Hopf algebra of $T(V)$
   and let the functor $A$ be given by $A(V)=\bfk\otimes_{B(V)}T(V)$.
   Then there is a homotopy fibration sequence 
   \[\nameddright{\Omega \left(\bigvee_{n=1}^\infty \bar
      Q_nB(Y)\right)}{\Omega\phi}{\Omega Y}{}{\bar A(Y)}
      \longrightarrow\namedright{\bigvee_{n=1}^\infty \bar Q_nB(Y)}{\phi}{Y}\] 
   where $Y\in\CoH$ and a functorial decomposition 
   \[\Omega Y\simeq \Omega \left(\bigvee_{n=1}^\infty \bar
         Q_nB(Y)\right)\times \bar A(Y).\] 
   $\qqed$ 
\end{thm}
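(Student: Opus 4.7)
The plan is to set $\bar A(Y)$ equal to the homotopy fibre of the map $\phi\colon \bigvee_{n=1}^{\infty}\bar Q_{n}B(Y) \to Y$ constructed just after Theorem~\ref{geomreal}, and to derive both assertions from the observation (already highlighted in that discussion) that $\Omega\phi$ admits a functorial homotopy retraction $s\colon \Omega Y \to \Omega(\bigvee_{n=1}^{\infty}\bar Q_{n}B(Y))$. Indeed, the proof of Theorem~\ref{geomreal} establishes that $r\circ\Omega\phi$ is a homotopy equivalence, so $s$ may be taken to be the composite of $r$ with a homotopy inverse of $r\circ\Omega\phi$.

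The fibration sequence is then simply the Puppe extension of the defining fibration $\bar A(Y) \to \bigvee_{n=1}^{\infty}\bar Q_{n}B(Y)\to Y$, namely
\[
\Omega\Bigl(\bigvee_{n=1}^{\infty}\bar Q_{n}B(Y)\Bigr) \stackrel{\Omega\phi}{\longrightarrow} \Omega Y \stackrel{\partial}{\longrightarrow} \bar A(Y) \longrightarrow \bigvee_{n=1}^{\infty}\bar Q_{n}B(Y) \stackrel{\phi}{\longrightarrow} Y.
\]
Functoriality in $Y\in\CoH$ is automatic, since $\bar Q_{n}B$, $\phi$, and the homotopy-fibre construction are all functorial on $\CoH$.

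For the product decomposition I would rotate the fibration one further step to the principal fibration $\Omega(\bigvee_{n=1}^{\infty}\bar Q_{n}B(Y))\stackrel{\Omega\phi}{\to}\Omega Y\stackrel{\partial}{\to}\bar A(Y)$, whose fibre inclusion $\Omega\phi$ is split by $s$. The pair $(s,\partial)$ then defines a natural map $\Omega Y \to \Omega(\bigvee_{n=1}^{\infty}\bar Q_{n}B(Y))\times \bar A(Y)$, and the task reduces to showing this is a homotopy equivalence. I expect the only real obstacle to lie here, and the plan is to verify it on mod-$p$ homology. By Theorem~\ref{geomreal}~(2)--(3) the image of $s_{*}$ realises the sub-Hopf algebra $B(V)\subseteq T(V)\cong H_{*}(\Omega Y)$; the coalgebra-split hypothesis on $B(V)$ gives a coalgebra splitting $T(V)\cong B(V)\otimes A(V)$ with complement $A(V)=\bfk\otimes_{B(V)}T(V)$, which forces $\partial_{*}$ to surject onto the complementary factor $A(V)$. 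A Poincar\'{e}-series comparison then shows that $(s_{*},\partial_{*})$ is an isomorphism, yielding the product decomposition and, at the same time, the identification $\widetilde H_{*}(\bar A(Y))\cong A(V)$ that justifies the notation.
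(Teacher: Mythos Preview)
Your approach is exactly the paper's: define $\bar A(Y)$ as the homotopy fibre of $\phi$ and use that $r\circ\Omega\phi$ is an equivalence (hence $\Omega\phi$ has a functorial retraction) to obtain both the fibration sequence and the product splitting. The paper simply says ``we immediately obtain the following'' at this point.

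One small remark on your final paragraph: the homology/Poincar\'{e}-series argument you sketch for showing $(s,\partial)$ is an equivalence is more work than needed, and the step ``which forces $\partial_{*}$ to surject onto the complementary factor $A(V)$'' is not quite justified as stated. The cleaner route is purely homotopical: the fibration $\Omega(\bigvee_{n}\bar Q_{n}B(Y))\xrightarrow{\Omega\phi}\Omega Y\xrightarrow{\partial}\bar A(Y)$ is principal, and $(s,\partial)$ is a map over $\bar A(Y)$ to the trivial fibration which restricts to the identity on fibres (since $s\circ\Omega\phi\simeq\id$). The five lemma on homotopy groups then gives the equivalence directly, with the homology identification $H_{*}(\bar A(Y))\cong A(V)$ falling out afterwards from the resulting K\"{u}nneth splitting of $H_{*}(\Omega Y)\cong T(V)$.
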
 

Note that $\bar A$ is a geometric realization of $A$. 
\medskip 

\noindent 
\begin{proof}[Proof of Theorem~\ref{coHfib}] 
In Theorem~\ref{geomrealfib} we can choose $B(V)$ to be $B^{\max}(V)$. 
The fiber $\bar{A}(Y)$ of $\phi$ is now, by definition, $A^{\min}(Y)$. The 
theorem follows immediately. 
\end{proof}

\section{The generalization of the Hilton-Milnor Theorem} 
\label{sec:HM} 

In this section we prove Theorem~\ref{HM}. We begin by stating 
a key general result from~\cite{GTW}. 

\begin{thm} 
   \label{GTW} 
   Let $Y$ and $Z$ be simply-connected co-$H$-spaces. There 
   is a homotopy decomposition 
   \[Z\wedge\Omega Y\simeq\bigvee_{n=1}^{\infty} [Z\wedge\Omega Y]_{n}\] 
   such that: 
   \begin{enumerate} 
      \item[1)] each space $[Z\wedge\Omega Y]_{n}$ is a simply-connected 
                     co-$H$-space; 
      \item[2)] $\rhlgy{[Z\wedge\Omega Y]_{n}}\cong 
                           \rhlgy{Z}\otimes(\Sigma^{-1}\rhlgy{Y})^{\otimes n}$; 
      \item[3)] if $Z=S^{1}$ and $Y=\Sigma X$ then 
                     $[Z\wedge\Omega\Sigma X]_{n}\simeq\Sigma X^{(n)}$. 
   \end{enumerate} 
   $\qqed$ 
\end{thm}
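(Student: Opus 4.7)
The plan is to mimic the classical James splitting argument for $\Sigma \Omega \Sigma X \simeq \bigvee_n \Sigma X^{(n)}$, making two simultaneous generalizations: the inner space $\Sigma X$ is replaced by an arbitrary simply-connected co-$H$-space $Y$, and the outer smash factor $S^1$ is replaced by an arbitrary simply-connected co-$H$-space $Z$. The key observation is that the natural retractions used in the James argument depend only on (i) $Y$ being able to produce, via its co-$H$-structure, iterated comultiplications that give the tensor-length grading on $H_*(\Omega Y)$, and (ii) $Z$ being a co-$H$-space so that smashing with $Z$ converts additive decompositions into wedge decompositions.

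First I would fix the homological target. Since $Y$ is a simply-connected co-$H$-space, $H_*(\Omega Y) \cong T(V)$ as Hopf algebras with $V = \Sigma^{-1}\rhlgy{Y}$. The tensor-length filtration on $T(V)$ is natural with associated graded $\bigoplus_n V^{\otimes n}$, so $\rhlgy{Z \wedge \Omega Y} \cong \bigoplus_{n \geq 1} \rhlgy{Z} \otimes V^{\otimes n}$, pinning down the homology of each proposed summand.

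Second, I would construct the summands. For each $n \geq 1$, I would build a natural map $\iota_n \colon [Z \wedge \Omega Y]_n \to Z \wedge \Omega Y$ of co-$H$-spaces whose image in homology is $\rhlgy{Z} \otimes V^{\otimes n}$, together with a natural retraction $r_n \colon Z \wedge \Omega Y \to [Z \wedge \Omega Y]_n$ satisfying $r_n \circ \iota_n \simeq \id$. The map $\iota_n$ is produced from the iterated co-$H$-comultiplication of $Y$ (which yields maps into smash powers modulo lower-order terms) after looping and smashing with $Z$. The retraction $r_n$ is produced by iterating the co-$H$-comultiplication $Z \to Z \vee Z$, which after smashing allows one to project $Z \wedge \Omega Y$ onto the $n$-th filtration quotient. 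These $(\iota_n, r_n)$ are then assembled into a homotopy equivalence $Z \wedge \Omega Y \simeq \bigvee_n [Z \wedge \Omega Y]_n$ by a Poincar\'e series / associated graded argument parallel to the one in the proof of Theorem~\ref{geomreal}.

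Properties (1) and (2) are immediate from the construction: each summand is a functorial retract of the co-$H$-space $Z \wedge \Omega Y$, hence a simply-connected co-$H$-space, and its homology is the prescribed tensor-length piece. Property (3) follows by naturality from the classical James decomposition, since for $Z = S^1$ and $Y = \Sigma X$ the construction reduces to James' original splitting. The main obstacle is the simultaneous construction of the $\iota_n$ and $r_n$ from only the co-$H$-axioms: classically they arise from James' combinatorial model $J(X)$ of $\Omega \Sigma X$, but for a general co-$H$-space $Y$ no such combinatorial model is available, and one must produce the relevant natural maps directly, carefully verifying that iterated co-$H$-comultiplication of $Y$ plays the role of multiplication in $\Omega \Sigma X$ while iterated co-$H$-comultiplication of $Z$ plays the role of the suspension coproduct.
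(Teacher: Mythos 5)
This theorem is quoted in the paper directly from \cite{GTW} with a terminal ``$\qqed$'' and no argument supplied, so there is no internal proof to compare against; I will instead assess your outline on its own terms.

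Your sketch correctly pins down the homological target $\rhlgy{Z}\otimes V^{\otimes n}$ with $V=\Sigma^{-1}\rhlgy{Y}$ via the isomorphism $\hlgy{\Omega Y}\cong T(V)$, and correctly isolates the role of the outer co-$H$-structure on $Z$ as the device that converts a natural filtration on $\hlgy{Z\wedge\Omega Y}$ into a wedge splitting; this is the right framework and matches the spirit of \cite{GTW}. The genuine gap is at exactly the step you flag as ``the main obstacle,'' namely the actual construction of the maps $\iota_n$ and retractions $r_n$, and your proposed mechanism for producing $\iota_n$ does not work as stated. Iterating the co-$H$-comultiplication of $Y$ produces only maps $Y\to Y^{\vee n}$ into \emph{wedges}; it does not yield natural maps from $\Omega Y$ toward smash powers of $\Sigma^{-1}Y$, ``modulo lower-order terms'' or otherwise, and the collapse $Y\vee Y\to Y\wedge Y$ kills the comultiplication. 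For $Y=\Sigma X$ the needed maps are the James--Hopf invariants, which come out of the combinatorial model $J(X)\simeq\Omega\Sigma X$; for an arbitrary simply-connected co-$H$-space $Y$ no such model exists, and producing desuspended James--Hopf invariants (for instance by exploiting that $Y$ is a retract of $\Sigma\Omega Y$ and pushing the James filtration on $\Omega\Sigma\Omega Y$ across the retraction in a controlled way) is the entire technical content of \cite{GTW}. Note also that one cannot appeal to a Hilton--Milnor-type decomposition of $\Omega(Y\vee Y)$ to extract the maps, since within this paper Theorem~\ref{HM} is deduced from Theorem~\ref{GTW} through Proposition~\ref{GTWprop}, so that route would be circular. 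In short, your proposal supplies a correct reduction --- given natural maps $\iota_n$, $r_n$ with the stated homological properties, an associated-graded and Poincar\'{e}-series argument finishes the job --- but it does not supply those maps, and that is precisely where the theorem's difficulty concentrates.
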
 

In particular, if $Z=S^{1}$ then we obtain a homotopy decomposition 
of $\Sigma\Omega Y$ which generalizes James' decomposition of 
$\Sigma\Omega\Sigma X$, as discussed in the Introduction. The 
application of Theorem~\ref{GTW} that we need is the following. 

\begin{prop} 
   \label{GTWprop} 
   Let $Y_{1},\ldots,Y_{m}$ be simply-connected co-$H$-spaces. 
   There is a homotopy decomposition 
   \[\Sigma\Omega Y_{1}\wedge\cdots\wedge\Omega Y_{m}\simeq 
           \bigvee_{n_{1},\ldots,n_{m}=1}^{\infty}  M((Y_{i},n_{i})_{i=1}^{m})\] 
   such that: 
   \begin{enumerate} 
      \item[1)] each space $M((Y_{i},n_{i})_{i=1}^{m})$ 
                    is a simply-connected co-$H$-space; 
      \item[2)] $\rhlgy{M((Y_{i},n_{i})_{i=1}^{m})}\cong 
                            \Sigma\left((\Sigma^{-1}\rhlgy{Y_{1}})^{\otimes n_{1}}\otimes\cdots  
                                 \otimes(\Sigma^{-1}\rhlgy{Y_{m}})^{\otimes n_{m}}\right)$; 
      \item[3)] if $Y_{i}=\Sigma X_{i}$ for $1\leq i\leq m$ then 
                    $M((\Sigma X_{i},n_{i})_{i=1}^{m})\simeq 
                         \Sigma X^{(n_{1})}\wedge\cdots\wedge X^{(n_{m})}$.  
   \end{enumerate} 
\end{prop}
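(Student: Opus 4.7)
The plan is to prove the proposition by induction on $m$, with Theorem~\ref{GTW} as the main tool at each stage. The base case $m=1$ is direct: take $Z=S^{1}$ and $Y=Y_{1}$ in Theorem~\ref{GTW}, and set $M((Y_{1},n_{1})):=[S^{1}\wedge\Omega Y_{1}]_{n_{1}}$. Properties (1)--(3) then follow immediately from the corresponding parts of Theorem~\ref{GTW}.

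For the inductive step, suppose the result is known for $m-1$ co-$H$-spaces. Rewrite
$$\Sigma\Omega Y_{1}\wedge\cdots\wedge\Omega Y_{m}\simeq\bigl(\Sigma\Omega Y_{1}\wedge\cdots\wedge\Omega Y_{m-1}\bigr)\wedge\Omega Y_{m}$$
and apply the inductive hypothesis to the first factor, producing
$$\bigvee_{n_{1},\ldots,n_{m-1}=1}^{\infty} M((Y_{i},n_{i})_{i=1}^{m-1})\wedge\Omega Y_{m}.$$
By inductive hypothesis part (1), each $M((Y_{i},n_{i})_{i=1}^{m-1})$ is a simply-connected co-$H$-space, so Theorem~\ref{GTW} applies to each smash product with $Z=M((Y_{i},n_{i})_{i=1}^{m-1})$ and $Y=Y_{m}$. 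Define $M((Y_{i},n_{i})_{i=1}^{m}):=\bigl[M((Y_{i},n_{i})_{i=1}^{m-1})\wedge\Omega Y_{m}\bigr]_{n_{m}}$. Property (1) is immediate from Theorem~\ref{GTW}(1), and property (2) follows from Theorem~\ref{GTW}(2) combined with the inductive formula, yielding $\rhlgy{M((Y_{i},n_{i})_{i=1}^{m})}\cong\Sigma\left((\Sigma^{-1}\rhlgy{Y_{1}})^{\otimes n_{1}}\otimes\cdots\otimes(\Sigma^{-1}\rhlgy{Y_{m}})^{\otimes n_{m}}\right)$.

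The main obstacle is property (3). When every $Y_{i}=\Sigma X_{i}$, the inductive hypothesis writes $M((\Sigma X_{i},n_{i})_{i=1}^{m-1})\simeq\Sigma W$ with $W=X_{1}^{(n_{1})}\wedge\cdots\wedge X_{m-1}^{(n_{m-1})}$, so one must identify $[\Sigma W\wedge\Omega\Sigma X_{m}]_{n_{m}}$ with $\Sigma W\wedge X_{m}^{(n_{m})}$. Since Theorem~\ref{GTW}(3) directly covers only the case $Z=S^{1}$, the required identification must invoke the naturality of the construction in~\cite{GTW} in the variable $Z$: smashing the James splitting $\Sigma\Omega\Sigma X_{m}\simeq\bigvee_{n}\Sigma X_{m}^{(n)}$ with $W$ and comparing piece-by-piece with the Theorem~\ref{GTW} splitting of $\Sigma W\wedge\Omega\Sigma X_{m}$ forces $[\Sigma W\wedge\Omega\Sigma X_{m}]_{n_{m}}\simeq\Sigma(W\wedge X_{m}^{(n_{m})})$, which is precisely $\Sigma X_{1}^{(n_{1})}\wedge\cdots\wedge X_{m}^{(n_{m})}$, completing the induction.
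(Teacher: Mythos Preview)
Your proof is correct and follows essentially the same inductive approach as the paper: handle $m=1$ via Theorem~\ref{GTW} with $Z=S^{1}$, then peel off one $\Omega Y_{m}$ at a time and reapply Theorem~\ref{GTW} with $Z$ equal to the co-$H$ summand already obtained. If anything you are more careful than the paper about property~(3): the paper simply asserts the identification $M((\Sigma X_{i},n_{i})_{i=1}^{2})\simeq\Sigma X_{1}^{(n_{1})}\wedge X_{2}^{(n_{2})}$ without comment, whereas you correctly flag that Theorem~\ref{GTW}(3) as stated covers only $Z=S^{1}$ and that one must invoke naturality in $Z$ (equivalently, smash the James splitting of $\Sigma\Omega\Sigma X_{m}$ with the extra factor) to finish the induction.
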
 

\begin{proof} 
First, consider the special case when $m=1$. We wish to decompose 
$\Sigma\Omega Y_{1}$. Applying Theorem~\ref{GTW} with $Z=S^{1}$ 
and $Y=Y_{1}$, we obtain a homotopy decomposition 
\[\Sigma\Omega Y\simeq\bigvee_{n_{1}=1}^{\infty} M(Y_{1},n_{1})\] 
where $M(Y_{1},n_{1})=[\Sigma\Omega Y_{1}]_{n_{1}}$. In particular,  
$M(Y_{1},n_{1})$ is a simply-connected co-$H$-space,  
$\rhlgy{M(Y_{1},n_{1})}\cong 
        \Sigma(\Sigma^{-1}\rhlgy{Y_{1}})^{\otimes n_{1}}$, 
and if $Y_{1}=\Sigma X_{1}$ then 
$M(\Sigma X_{1},n_{1})\simeq\Sigma X_{1}^{(n_{1})}$. 

Next, consider the special case when $m=2$. We wish to decompose 
$\Sigma\Omega Y_{1}\wedge\Omega Y_{2}$. From the $m=1$ case we have 
\[\Sigma\Omega Y_{1}\wedge\Omega Y_{2}\simeq 
      \left(\bigvee_{n_{1}=1}^{\infty} M(Y_{1},n_{1})\right)\wedge\Omega Y_{2}\simeq 
      \bigvee_{n_{1}=1}^{\infty} M(Y_{1},n_{1})\wedge\Omega Y_{2}.\] 
Since $M(Y_{1},n_{1})$ is a co-$H$-space, for 
each $n_{1}\geq 1$ we can apply Theorem~\ref{GTW} with 
$Z=M(Y_{1},n_{1})$ and $Y=Y_{2}$ to further decompose 
$M(Y_{1},n_{1})\wedge\Omega Y_{2}$. Collecting these, we 
obtain a homotopy decomposition 
\[\Sigma\Omega Y_{1}\wedge\Omega Y_{2}\simeq 
       \bigvee_{n_{1},n_{2}=1}^{\infty} M((Y_{i},n_{i})_{i=1}^{2})\] 
where each space $M((Y_{i},n_{i})_{i=1}^{2})$ is a simply-connected 
co-$H$-space,  
\[\rhlgy{M((Y_{i},n_{i})_{i=1}^{m})}\cong\Sigma\left( 
         (\Sigma^{-1}\rhlgy{Y_{1}})^{\otimes n_{1}}\otimes 
         (\Sigma^{-1}\rhlgy{Y_{2}})^{\otimes n_{2}}\right)\]   
and if $Y_{i}=\Sigma X_{i}$ then 
$M((Y_{i},n_{i})_{i=1}^{2})\simeq\Sigma X_{1}^{(n_{1})}\wedge X^{(n_{2})}$. 

More generally, if $m>2$ then the procedure in the previous 
paragraph is iterated to obtain the homotopy decomposition asserted 
in the statement of the proposition. 
\end{proof} 

As a final preliminary result, we state a homotopy decomposition 
proved in~\cite{P}. For a space $X$ and an integer $j$, let 
$j\cdot X=\bigvee_{i=1}^{j} X$. 

\begin{thm} 
   \label{Porter} 
   Let $X_{1},\ldots,X_{m}$ be simply-connected $CW$-complexes of 
   finite type. Let $F$ be the homotopy fiber of the inclusion 
   \(\namedright{\bigvee_{i=1}^{m} X_{i}}{}{\prod_{i=1}^{m} X_{i}}\). 
   There is a homotopy equivalence 
   \[F\simeq\bigvee_{j=2}^{m}\left(
     \bigvee_{1\leq i_{1}<\cdots<i_{j}\leq m} (j-1)\cdot 
     \Sigma\Omega X_{i_{1}}\wedge\cdots\wedge\Omega X_{i_{j}}\right).\] 
   $\qqed$ 
\end{thm}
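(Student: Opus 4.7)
The plan is to prove this classical theorem of Porter by induction on $m$. For the base case $m=2$, the homotopy fiber of $X_{1}\vee X_{2}\hookrightarrow X_{1}\times X_{2}$ is $\Sigma\Omega X_{1}\wedge\Omega X_{2}$, which matches Porter's formula with $j=2$ and $(j-1)=1$. This is a standard Ganea--James computation; one route is via the classical splitting $\Omega(X_{1}\vee X_{2})\simeq\Omega X_{1}\times\Omega X_{2}\times\Omega(\Sigma\Omega X_{1}\wedge\Omega X_{2})$, valid for any pair of simply-connected spaces, which identifies the loop of the fiber and hence, by a connectivity argument, the fiber itself.

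For the inductive step, set $W_{k}=\bigvee_{i=1}^{k}X_{i}$ and $P_{k}=\prod_{i=1}^{k}X_{i}$ and factor $W_{m}\hookrightarrow P_{m}$ as
\[W_{m} = W_{m-1}\vee X_{m}\hookrightarrow W_{m-1}\times X_{m}\stackrel{\iota\times\id}{\hookrightarrow} P_{m-1}\times X_{m} = P_{m}.\]
By the base case the first inclusion has fiber $\Sigma\Omega W_{m-1}\wedge\Omega X_{m}$, and by induction the second has fiber $F_{m-1}$. This yields a homotopy fibration
\[\Sigma\Omega W_{m-1}\wedge\Omega X_{m}\longrightarrow F_{m}\longrightarrow F_{m-1},\]
with a natural section $F_{m-1}\to F_{m}$ coming from the compatibility of $W_{m-1}\hookrightarrow W_{m}$ with $P_{m-1}\hookrightarrow P_{m}$.

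To assemble Porter's formula, for each subset $S=\{i_{1}<\cdots<i_{j}\}\subseteq\{1,\ldots,m\}$ with $j\geq 2$ I would produce $(j-1)$ higher-Whitehead-product maps $\Sigma\Omega X_{i_{1}}\wedge\cdots\wedge\Omega X_{i_{j}}\to F_{m}$. Those for subsets avoiding $m$ arise by composing the inductive maps into $F_{m-1}$ with the section; those for subsets containing $m$ arise from the fiber $\Sigma\Omega W_{m-1}\wedge\Omega X_{m}$ after applying a James-type splitting of $\Sigma\Omega W_{m-1}$ to isolate the relevant multilinear smash summand. Wedging these together gives a map from the right-hand side of Porter's formula into $F_{m}$, which one verifies is a homology isomorphism by computing the Poincar\'e series of $F_{m}$ via the Serre spectral sequence of $F_{m}\to W_{m}\to P_{m}$ (using the Hilton--Milnor description of $H_{*}(\Omega W_{m})$). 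Whitehead's theorem then promotes this to a homotopy equivalence.

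The most delicate point is the multiplicity $(j-1)$: a naive count via a Hall basis of the multilinear part of the free Lie algebra gives $(j-1)!$ basic brackets per subset of size $j$, but passing to unordered smash products and taking into account the higher-Whitehead relations enforced by the vanishing in $P_{m}$ collapses this down to $(j-1)$ independent summands. Establishing this combinatorial reduction---equivalently, upgrading the fibration-with-section $\Sigma\Omega W_{m-1}\wedge\Omega X_{m}\to F_{m}\to F_{m-1}$ from a mere product splitting to a genuine wedge splitting with the stated multiplicities---is the main technical obstacle, and is the heart of Porter's original argument.
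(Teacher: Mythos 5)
The paper does not prove Theorem~\ref{Porter}; it quotes it from Porter~\cite{P} (note the $\square$ after the statement), so there is no internal proof for you to match. On its own merits, your base case and the inductive fibration
\[\Sigma\Omega W_{m-1}\wedge\Omega X_m\longrightarrow F_m\longrightarrow F_{m-1}\]
with its section are set up correctly, but the step you identify as ``the main technical obstacle'' --- upgrading this fibration-with-section to a wedge splitting --- is not merely hard, it is false as stated. The fiber $K=\Sigma\Omega W_{m-1}\wedge\Omega X_m$ is strictly larger in homology than what Porter's formula allows. Already for $m=3$, writing $a_i$ for the Poincar\'{e} series of $\widetilde H_*(\Omega X_i)$, one has $P(\widetilde H_*(K))=t\,a_3\,\dfrac{a_1+a_2+2a_1a_2}{1-a_1a_2}$ (using $H_*(\Omega(X_1\vee X_2))\cong H_*(\Omega X_1)*H_*(\Omega X_2)$), whereas Porter's $F_3$ has the polynomial series $t\,(a_1a_2+a_1a_3+a_2a_3+2a_1a_2a_3)$. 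So $F_m\not\simeq F_{m-1}\vee K$; in the Serre spectral sequence of $K\to F_m\to F_{m-1}$ there are nontrivial differentials killing most of $H_*(K)$, and your maps $\Sigma\Omega X_{i_1}\wedge\cdots\wedge\Omega X_{i_j}\to K\to F_m$ for $m\in S$ land in a quotient of $H_*(K)$ that you have not identified. Establishing that the surviving piece is exactly $K^{\rm Porter}$ is essentially the entire content of the theorem and cannot be waved away as a ``combinatorial reduction.''

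Your remark on multiplicities is also off: the count $(j-1)!$ from a Hall basis of the multilinear component is not ``collapsed'' by Whitehead relations; rather, the $(j-1)!$ basic products get distributed across higher smash powers of the $\Omega X_i$ once one loops, and the unlooped fiber $F$ genuinely has only $(j-1)$ summands of type $S$. A cleaner route, and the one used in modern treatments of this and its generalization~(\ref{Fmkdecomp}), avoids your fibration entirely: write $W_m=W_{m-1}\vee X_m$ as a homotopy pushout over a point, pull back the path fibration over $P_m$ using Mather's cube theorem to exhibit $F_m$ as the homotopy pushout of $F_{m-1}\times\Omega X_m\leftarrow\prod_{i\le m}\Omega X_i\to\prod_{i<m}\Omega X_i$, and then split that pushout directly (it is a half-smash type construction). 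This produces the $(j-1)$ multiplicity by an honest count rather than a cancellation, and is what makes the induction go through.
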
  

\begin{rem} 
\label{Porterremark} 
A version of Theorem~\ref{Porter} holds for an infinite wedge 
$\bigvee_{i=1}^{\infty} X_{i}$, provided the spaces $X_{i}$ can 
be ordered so that the connectivity of $X_{i}$ is nondecreasing and 
tends to infinity. This guarantees that the fiber $F$ 
of the inclusion 
\(\namedright{\bigvee_{i=1}^{\infty} X_{i}}{}{\prod_{i=1}^{\infty} X_{i}}\) 
is of finite type. 
\end{rem} 

\noindent 
\begin{proof}[Proof of Theorem~\ref{HM}] 
We first consider the special case when $Y_{i}=\Sigma X_{i}$, that is, 
the usual Hilton-Milnor Theorem. One way to think of the proof is  
as follows. First, including the wedge into the product gives a 
homotopy fibration 
\[\nameddright{F_{1}}{}{\bigvee_{i=1}^{m}\Sigma X_{i}}{} 
     {\prod_{i=1}^{m}\Sigma X_{i}}\] 
that defines the space $F_{1}$. This fibration splits after looping as 
\begin{equation} 
  \label{HMstep1} 
  \Omega(\bigvee_{i=1}^{m}\Sigma X_{i})\simeq 
     \prod_{i=1}^{m}\Omega\Sigma X_{i}\times\Omega F_{1}. 
\end{equation}  
Second, by Theorem~\ref{Porter}, 
\[F_{1}\simeq\bigvee_{j=2}^{m}\left(
     \bigvee_{1\leq i_{1}<\cdots<i_{j}\leq m} (j-1)\cdot 
     \Sigma\Omega\Sigma X_{i_{1}}\wedge\cdots\wedge\Omega\Sigma X_{i_{j}}\right).\]
Iteratively using James' decomposition 
$\Sigma\Omega\Sigma X\simeq\bigvee_{n=1}^{\infty}\Sigma X^{(n)}$ 
we obtain a refined decomposition 
\[F_{1}\simeq\bigvee_{\alpha_{1}\in\mathcal{J}_{1}} M_{\alpha_{1}}\] 
for some index set $\mathcal{J}_{1}$, where each $M_{\alpha_{1}}$ is of 
the form $\Sigma X_{t_{1}}^{(r_{1})}\wedge\cdots\wedge X_{t_{l}}^{(r_{l})}$ 
for $l\geq 2$, $1\leq t_{1}<\cdots<t_{l}\leq m$ and $r_{1},\ldots,r_{l}\geq 1$. 
Third, including the wedge into the product gives a homotopy fibration 
\[\nameddright{F_{2}}{}{\bigvee_{\alpha_{1}\in\mathcal{J}_{1}} M_{\alpha_{1}}} 
         {}{\prod_{\alpha_{1}\in\mathcal{J}_{1}} M_{\alpha_{1}}}\] 
which defines the space $F_{2}$. This fibration splits after looping 
so~(\ref{HMstep1}) refines to a decomposition 
\begin{equation} 
  \label{HMstep2} 
  \Omega(\bigvee_{i=1}^{m}\Sigma X_{i})\simeq\prod_{i=1}^{m}\Omega\Sigma X_{i} 
       \times\prod_{\alpha_{1}\in\mathcal{J}_{1}}\Omega M_{\alpha_{1}}\times 
       \Omega F_{2}. 
\end{equation}  
Observe that since each $\Sigma X_{i}$ is simply-connected, the spaces 
$M_{\alpha_{1}}$ can be ordered so their connectivity is nondecreasing 
and tending to infinity. Therefore, Remark~\ref{Porterremark} 
implies that Theorem~\ref{Porter} can be applied to decompose $F_{2}$. 
The process can now be iterated to produce 
fibers $F_{k}$ for $k\geq 3$ and a decomposition  
\begin{equation} 
 \label{HMstepk} 
  \Omega(\bigvee_{i=1}^{m}\Sigma X_{i})\simeq\prod_{i=1}^{m}\Omega\Sigma X_{i} 
       \times\left(\prod_{j=1}^{k-1}\prod_{\alpha_{j}\in\mathcal{J}_{j}}\Omega M_{\alpha_{j}} 
       \right)\times\Omega F_{k}. 
\end{equation}  
where each $M_{\alpha_{j}}$ is of the form 
$\Sigma X_{t_{1}}^{(r_{1})}\wedge\cdots\wedge X_{t_{l}}^{(r_{l})}$ 
for $l\geq j+1$, $1\leq t_{1}<\cdots<t_{l}\leq m$ and $r_{1},\ldots,r_{l}\geq 1$. 
Note that the condition $l\geq j+1$ implies that the connectivity of $F_{k}$ 
is strictly increasing with $k$, and so tends to infinity. Thus, the 
decompositions of $\Omega(\bigvee_{i=1}^{m}\Sigma X_{i})$ stabilize. 
What remains is a bookkeeping argument that makes explicit the 
factors $\Sigma X_{t_{1}}^{(r_{1})}\wedge\cdots\wedge X_{t_{l}}^{(r_{l})}$. 
As stated in the Introduction, this takes the form of an index set 
determined by a vector space basis of the free Lie algebra 
$L\langle x_{1},\ldots,x_{m}\rangle$. 

We wish to generalize this to the case of $\Omega(\bigvee_{i=1}^{m} Y_{i})$ 
for simply-connected co-$H$-spaces $Y_{i}$. To do so, simply 
replace the use of James' decomposition above with Proposition~\ref{GTW}. 
The rest of the argument goes through verbatim. 
\end{proof}

\section{Applications} 
\label{sec:apps} 

In this section we give four examples to illustrate how Theorems~\ref{coHfib} 
and~\ref{HM} can be used to produce useful homotopy decompositions 
of spaces of interest to other areas of mathematics. 
\medskip 

\noindent 
\textit{A $p$-local geometric Poincar\'{e}-Birkhoff-Witt Theorem}. 
Combining Theorems~\ref{coHfib} and~\ref{HM} allows for a 
complete decomposition of the loops on a co-$H$-space into 
a product of functorially indecomposable spaces. That is, 
Theorem~\ref{coHfib} states that for a simply-connected 
co-$H$-space $Y$ there is a homotopy decomposition 
\[\Omega Y\simeq A^{\min}(Y)\times 
      \Omega(\bigvee_{n=2}^{\infty} Q_{n} B(Y))\] 
where $A^{\min}(Y)$ is functorially indecomposable. Since each space 
$Q_{n} B(Y)$ is a co-$H$-space, Theorem~\ref{HM} implies that there 
is a homotopy decomposition 
\[\Omega(\bigvee_{n=2}^{\infty} Q_{n} B(Y))\simeq 
       \prod_{\alpha\in\mathcal{I}}\Omega M((Q_{n}B(Y),\alpha_{n})_{n=2}^{\infty})\] 
where each space $M((Q_{n}B(Y),\alpha_{n})_{n=2}^{\infty})$ is a 
simply-connected co-$H$-space. 
The homotopy decomposition in Theorem~\ref{coHfib} can now be applied 
to each of the factors $\Omega M((Q_{n}B(Y),\alpha_{n})_{n=2}^{\infty})$ 
to produce an $A^{\min}$ that is functorially indecomposable and a complementary 
factor which is the loops on a wedge of simply-connected co-$H$-spaces. 
Iterating, we obtain a decomposition of $\Omega Y$ as a product of $A^{\min}$'s. 

\begin{thm} 
   \label{Amincomplete} 
   Let $Y$ be a simply-connected co-$H$-space. Then there is a 
   functorial homotopy decomposition 
   \[\Omega Y\simeq\prod_{\gamma\in\mathcal{J}} A^{\min}(Y_{\gamma})\] 
   for some index set $\mathcal{J}$, where each $Y_{\gamma}$ is a 
   simply-connected co-$H$-space and each factor $A^{\min}(Y_{\gamma})$ 
   is functorially indecomposable.~$\qqed$ 
\end{thm}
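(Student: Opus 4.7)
The plan is to iterate Theorems~\ref{coHfib} and~\ref{HM} exactly as sketched in the paragraph above the statement, recording the $A^{\min}$-factors produced at each stage and then passing to a limit using a connectivity estimate. At stage one I would apply Theorem~\ref{coHfib} to $Y$ to obtain
\[\Omega Y\simeq A^{\min}(Y)\times\Omega\left(\bigvee_{n=2}^{\infty}Q_{n}B(Y)\right),\]
and then apply Theorem~\ref{HM} to the second factor, noting that each $Q_{n}B(Y)$ is a simply-connected co-$H$-space, to obtain
\[\Omega\left(\bigvee_{n=2}^{\infty}Q_{n}B(Y)\right)\simeq\prod_{\alpha\in\mathcal{I}_{1}}\Omega M_{\alpha}^{(1)},\]
where each $M_{\alpha}^{(1)}$ is a simply-connected co-$H$-space. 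This expresses $\Omega Y$ as $A^{\min}(Y)$ times a product of loop spaces on simply-connected co-$H$-spaces.

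At stage $k\geq 2$ I would repeat the procedure on each residual factor $\Omega M_{\alpha}^{(k-1)}$, peeling off $A^{\min}(M_{\alpha}^{(k-1)})$ and then applying Theorem~\ref{HM} to the remaining wedge $\bigvee_{n}Q_{n}B(M_{\alpha}^{(k-1)})$ to produce a new family of co-$H$-spaces $\{M_{\beta}^{(k)}\}_{\beta\in\mathcal{I}_{k}}$. After $k$ stages I obtain a decomposition of the form
\[\Omega Y\simeq A^{\min}(Y)\times\prod_{j=1}^{k-1}\prod_{\alpha\in\mathcal{I}_{j}}A^{\min}(M_{\alpha}^{(j)})\times\prod_{\beta\in\mathcal{I}_{k}}\Omega M_{\beta}^{(k)}.\]
Functoriality is inherited at every step since both input theorems are functorial on $\CoH$, and each $A^{\min}(M_{\alpha}^{(j)})$ is functorially indecomposable by construction in~\cite{SW1,STW2}.

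The main obstacle is to show that the iteration stabilizes, so that the residual piece $\prod_{\beta\in\mathcal{I}_{k}}\Omega M_{\beta}^{(k)}$ disappears in the limit. For this I would use a connectivity argument. If $M_{\alpha}^{(j)}$ is $c$-connected, then the defining homology isomorphism $\rhlgy{Q_{n}B(M_{\alpha}^{(j)})}\cong\Sigma Q_{n}B(\Sigma^{-1}\rhlgy{M_{\alpha}^{(j)}})$ forces $Q_{n}B(M_{\alpha}^{(j)})$ to be at least $nc$-connected, and Theorem~\ref{HM}(2) then forces each $M_{\beta}^{(j+1)}$ to be strictly more connected than $M_{\alpha}^{(j)}$, since only summands with $n\geq 2$ appear in the intermediate wedge. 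Consequently the minimum connectivity among the residual factors at stage $k$ grows without bound. In any fixed degree the decomposition therefore stabilizes after finitely many steps, so passing to the limit and collecting all $A^{\min}$-factors produced gives the asserted decomposition indexed by $\mathcal{J}=\{Y\}\sqcup\bigsqcup_{j\geq 1}\mathcal{I}_{j}$.
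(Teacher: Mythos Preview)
Your proposal is correct and follows exactly the approach the paper takes in the paragraph immediately preceding the theorem: apply Theorem~\ref{coHfib}, then Theorem~\ref{HM}, and iterate. You have in fact made explicit the connectivity estimate ensuring convergence (the residual co-$H$-spaces become increasingly connected since only $Q_nB$ with $n\geq 2$ appear), which the paper leaves implicit in the single word ``Iterating.''
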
 

Theorem~\ref{Amincomplete} is related to the Poincar\'{e}-Birkhoff-Witt 
Theorem. To explain how, let $V$ be a module over a field of characteristic $p$, 
and let $T(V)$ be the tensor algebra generated by $V$. This is made 
into a Hopf algebra by declaring that the generators are primitive and 
extending multiplicatively. There is a canonical isomorphism 
$T(V)\cong UL(V)$, where $L(V)$ is the free Lie algebra on~$V$ 
and $UL(V)$ is its universal enveloping algebra. Let $L_{n}(V)$ be 
the homogeneous component of the free Lie algeba $L(V)$ of 
tensor length $n$. The Poincar\'{e}-Birkhoff-Witt Theorem states 
that rationally there is a functorial coalgebra isomorphim 
$T(V)\cong\otimes_{n=1}^{\infty} S(L_{n}(V))$, where 
$S(\ )$ is the free symmetric algebra functor. Note that each 
$S(L_{n}(V))$ further decomposes as a product of exterior 
algebras and polynomial algebras on a single generator, 
with the generators in one-to-one correspondence with 
the module generators of~$L_{n}(V)$. This can be geometrically 
realized. If $Y$ is a simply-connected co-$H$-space such that 
$\hlgy{\Omega Y;\mathbb{Z}/p\mathbb{Z}}\cong T(V)$, then 
there is a rational homotopy equivalence 
$\Omega Y\simeq\prod_{n=1}^{\infty} S_{n}$ 
where $S_{n}$ is a product of odd dimensional spheres and 
the loops on odd dimensional spheres, with the property that 
$\hlgy{S_{n};\mathbb{Q}}\cong S(L_{n}(V))$. Theorem~\ref{Amincomplete} 
is a $p$-local analogue, in the sense that it produces a decomposition 
of $\Omega Y$ into a product of functorially indecomposable pieces, 
each of which geometrically realizes a functorially indecomposable 
factor of $T(\Sigma^{-1}\rhlgy{Y})$.  
\medskip 

\noindent 
\textit{Decompositions of Lie powers}. 
As above, let $V$ be a module over a field of characteristic $p$, regard 
$T(V)$ as $UL\langle V\rangle$, and let $L_{n}(V)$ be the homogeneous 
component of the free Lie algeba $L(V)$ of tensor length $n$. The 
module $L_{n}(V)$ is called the \emph{$n^{th}$ free Lie power} of $V$. 
Decompositions of Lie powers $L_{n}(V)$ over 
the general linear group $GL(V)$ is a subject of considerable recent 
activity in modular representation theory (see, for example, \cite{BS,ES}).  

Now suppose that $Y$ is a simply-connected co-$H$-space 
and $V=\Sigma^{-1}\rhlgy{Y}$. Then $\hlgy{\Omega Y}\cong T(V)$.  
In general, any functorial homotopy decomposition of $\Omega Y$ determines 
a functorial coalgebra decomposition of $T(V)$, which in turn determines 
a module decomposition of $L_{n}(V)$ over the general linear group 
for every $n$. Thus studying homotopy decompositions of $\Omega Y$ 
gives information about how Lie powers decompose.  

The method developed in~\cite{SW1}, introducing a functorial 
Poincar\'{e}-Birkhoff-Witt theorem, gives a fundamental connection 
between the homotopy theory of loops on co-$H$-spaces and 
the modular representation theory of Lie powers. It was shown 
in~\cite{LLW} that the modular representation theory of Lie powers 
over the general linear group is tightly related to functorial coalgebra 
decompositions of tensor algebras. Through this connection, one 
can investigate topological applications of new developments in 
representation theory. Conversely, topological methods such as 
Hopf invariants and techniques in Hopf algebras provide tools 
different from traditional methods in representation theory for 
studying Lie powers. Hopf invariants were obtained in geometry 
from the suspension splittings of loops on co-$H$-spaces. The 
homological behavior of Hopf invariants gives a family of natural 
coalgebra maps on tensor algebras involving certain important 
combinatorial information on shuffles~\cite{SW1}. By considering 
Hopf invariants together with techniques in Hopf algebras, \cite{LLW}  
generalized some important recent results on the representation theory 
of Lie powers given by Bryant-Schocker~\cite{BS}. 

For example, in~\cite{LLW} it was shown that the sub-Hopf algebra 
$B(V)$ of $T(V)$ generated by the set 
$\{L_{n}(V)\mid\mbox{$n$ is not a power of $p$}\}$ is a 
functorial coalgebra summand of $T(V)$. This was used to construct 
an explicit decomposition~\cite[6.3]{LLW} of $L_{m}(V)$ when $m$ 
is not a power of $p$. For our purposes, observe that $B(V)$ is a 
coalgebra-split sub-Hopf algebra of $T(V)$. So Theorems~\ref{geomreal} 
and~\ref{geomrealfib} imply that the Hopf-algebra map 
\(\namedright{B(V)}{}{T(V)}\) 
can be geometrically realized as a loop map 
\(\namedright{\Omega(\bigvee_{n=1}^{\infty}\bar{Q}_{n} B(Y))} 
     {}{\Omega Y}\) 
for some simply-connected co-$H$-space $Y$. 
The advantage of having a geometric realization is that it 
is stronger than simply having an algebraic decomposition. 
The topology of the decomposition may imply additional 
algebraic information beyond that used in~\cite{LLW}, which 
may give further insight into how Lie powers decompose. We 
leave specific applications of this to later work. 
\medskip 

\noindent 
\textit{Homotopy types of generalized moment-angle complexes}.  
Let $X_{1},\ldots,X_{m}$ be simply-connected, pointed $CW$-complexes of 
finite type. For $1\leq k\leq m$, define the space~$T^{m}_{k}$ by 
\[T^{m}_{k}=\{(x_{1},\ldots,x_{m})\in\prod_{i=1}^{m} X_{i}\mid 
       \mbox{at least $k$ of $x_{1},\ldots,x_{m}$ are the basepoint}\}.\] 
Inclusion into the product gives a map 
\(\namedright{T^{m}_{k}}{}{\prod_{i=1}^{m} X_{i}}\). 
Define the space $F^{m}_{k}$ by the homotopy fibration 
\[\nameddright{F^{m}_{k}}{}{T^{m}_{k}}{}{\prod_{i=1}^{m} X_{i}}.\] 
In~\cite{P} it was shown that there is a homotopy equivalence 
\begin{equation}
  \label{Fmkdecomp}
   F^{m}_{k}\simeq\bigvee_{j=m-k+1}^{m}\left(
     \bigvee_{1\leq i_{1}<\cdots<i_{j}\leq m}\binom{j-1}{m-k}
     \Sigma^{m-k}\Omega X_{i_{1}}\wedge\cdots\wedge\Omega X_{i_{j}}\right).
\end{equation} 
In particular, Theorem~\ref{Porter} is the special case when $k=m-1$. 

The spaces $F^{m}_{k}$ have received a great deal of attention 
lately as they are special cases of the generalized moment-angle 
complexes defined in~\cite{BBCG}. The classical moment-angle 
complexes are given by the special case when each $X_{i}=\cpinf$;  
they are fundamental objects in toric topology (see~\cite{BP,DJ}). 
Moreover, classical moment-angle complexes can be identified with 
complements of complex coordinate subspaces~\cite{BP}, which are 
fundamental objects in combinatorics (see, for example,~\cite{Bj}),  
and a major problem is to determine their homotopy type. It is 
therefore natural to also try to determine the homotopy type of 
generalized moment-angle complexes. Progress in this direction has 
been made in~\cite{GT1,GT2}.

In the special case of the spaces $F^{m}_{k}$, the decomposition 
in~(\ref{Fmkdecomp}) goes a long way towards determining the homotopy 
type. This can be refined considerably when each $X_{i}$ is a suspension,  
$X_{i}=\Sigma\overline{X}_{i}$, by iterating James' decomposition 
$\Sigma\Omega\Sigma X\simeq\bigvee_{n=1}^{\infty} X^{(n)}$. 
Doing so, one obtains a homotopy decomposition of 
$F^{m}_{k}$ as a large wedge of spaces of the form 
$\Sigma^{m-k}\overline{X}_{j_{1}}^{(n_{j_{1}})}\wedge\cdots\wedge 
         \overline{X}_{j_{l}}^{(n_{j_{l}})}$ 
where $1\leq j_{1}<\cdots <j_{l}\leq m$. Moreover, one obtains 
a decomposition of $\Omega F^{m}_{k}$ as the loops on a large 
wedge of suspensions. The Hilton-Milnor Theorem can now be 
applied to decompose further. 

All of this can now be generalized to the case of $F^{m}_{k}$ 
when each $X_{i}$ is a simply-connected co-$H$-space. The 
iteration of James' decomposition is replaced by the decomposition 
in Proposition~\ref{GTWprop} and the Hilton-Milnor Theorem is 
then replaced by its generalization in Theorem~\ref{HM}. 
\medskip 

\noindent 
\textit{Decompositions of the ring of quasi-symmetric functions}.   
The Hopf-algebra of non-symmetric functions $\NSymm$ is defined 
as the tensor algebra $T(z_{1},z_{2},\ldots)$, where $\vert z_{i}\vert=2i$, 
the coproduct is given by $\Delta(z_{n})=\Sigma_{s+t=n}\, z_{s}\otimes z_{t}$,  
and the antiautomorphism is given by 
$\chi(z_{n})=\Sigma_{\alpha_{1}+\cdots+\alpha_{m}=n}\, z_{\alpha_{1}}\cdots 
           z_{\alpha_{m}}$. 
The Hopf algebra of quasi-symmetric functions $\QSymm$ is defined 
as the Hopf algebra dual of $\NSymm$.  
In~\cite{BR}, Baker and Richter observed that there is an integral 
Hopf-algebra isomorphism 
$\cohlgy{\Omega\Sigma\cpinf;\mathbb{Z}}\cong\QSymm$. 
They then used topological properties of $\Omega\Sigma\cpinf$ to 
prove algebraic properties of $\QSymm$. 

In general, a $p$-local homotopy decomposition 
$\Omega\Sigma\cpinf\simeq\prod_{\alpha} A_{\alpha}$ 
for some spaces~$A_{\alpha}$ implies that there is a $p$-local 
algebra decomposition 
$\rcohlgy{\Omega\Sigma\cpinf;\plocal}\cong 
     \otimes_{\alpha}\rcohlgy{A_{\alpha};\plocal}$. 
Therefore we obtain a $p$-local algebra decomposition 
$\QSymm\cong\otimes_{\alpha}\rcohlgy{A_{\alpha};\plocal}$. 
Baker and Richter gave the following example. Recall that 
$\cohlgy{\cpinf;\mathbb{Z}}\cong\mathbb{Z}[x]$, where $x$ is 
in degree~$2$. In~\cite{MNT} it was shown that that there is 
a $p$-local homotopy decomposition 
$\Sigma\cpinf\simeq\bigvee_{i=1}^{p-1} A_{i}$ 
where $\cohlgy{A_{i}}$ consists of those elements in 
$\Sigma\cohlgy{\cpinf}$ in degrees of the form $2i+1+2k(p-1)$ 
for some $k\geq 0$. We obtain a homotopy decomposition 
\begin{equation} 
  \label{BRdecomp} 
  \Omega\Sigma\cpinf\simeq\Omega(\bigvee_{i=1}^{p-1} A_{i}). 
\end{equation}  
Note that each $A_{i}$ is a simply-connected co-$H$-space as it is 
a retract of $\Sigma\cpinf$. Baker and Richter used this to further 
decompose $\Omega\Sigma\cpinf$ by anticipating the 
generalization of the Hilton-Milnor Theorem in Theorem~\ref{HM}. 

We give a different $p$-local homotopy decomposition of $\Omega\Sigma\cpinf$ 
which is finer than Baker and Richter's, and which therefore implies 
a correspondingly finer $p$-local algebra decomposition of $\QSymm$. 
By~\cite{Ga}, for any simply-connected space~$X$ there is a homotopy 
fibration 
\(\nameddright{\Sigma\Omega X\wedge\Omega X}{}{\Sigma\Omega X} 
       {ev}{X}\) 
which splits after looping as 
$\Omega\Sigma\Omega X\simeq 
      \Omega X\times\Omega(\Sigma\Omega X\wedge\Omega X)$. 
In our case, let $B\cpinf$ be the Eilenberg-MacLane space $K(\mathbb{Z},3)$, 
so $\Omega B\cpinf\simeq\cpinf$. Then we obtain a homotopy fibration 
\[\nameddright{\Sigma\cpinf\wedge\cpinf}{}{\Sigma\cpinf}{ev}{B\cpinf}\] 
and a homotopy decomposition 
\[\Omega\Sigma\cpinf\simeq\cpinf\times\Omega(\Sigma\cpinf\wedge\cpinf).\] 

Consider three decompositions of the space $\Sigma\cpinf\wedge\cpinf$.  
First, applying the decomposition from~\cite{MNT} on the left factor we obtain 
\[\Sigma\cpinf\wedge\cpinf\simeq 
       (\bigvee_{i=1}^{p-1} A_{i})\wedge\cpinf\simeq 
       \bigvee_{i=1}^{p-1} (A_{i}\wedge\cpinf).\] 
Second, moving the suspension to the right wedge summand gives a 
similar homotopy decomposition 
\[\cpinf\wedge\Sigma\cpinf\simeq 
         \cpinf\wedge(\bigvee_{i=1}^{p-1} A_{i})\simeq 
         \bigvee_{i=1}^{p-1}(\cpinf\wedge A_{i}).\] 
Third, let   
\(T\colon\namedright{\cpinf\wedge\cpinf}{}{\cpinf\wedge\cpinf}\) 
be the map interchanging factors. Define self-maps of 
$\Sigma\cpinf\wedge\cpinf$ by $e_{1}=(1+\Sigma T)/2$ and 
$e_{2}=(1-\Sigma T)/2$. Observe that $(e_{1})_{\ast}$ and $(e_{2})_{\ast}$ 
are idempotents, $(e_{1})_{\ast}\circ (e_{2})_{\ast}=0$, and 
$(e_{1})_{\ast}+(e_{2})_{\ast}=1$. Therefore, if $E_{1}$ and $E_{2}$ 
are the mapping telescops of $e_{1}$ and $e_{2}$ respectively, then 
adding gives a map 
\(e\colon\namedright{\Sigma\cpinf\wedge\cpinf}{}{E_{1}\vee E_{2}}\) 
which is a homology isomorphism. Since $\Sigma\cpinf\wedge\cpinf$ 
is simply-connected, Whitehead's theorem implies that $e$ is a 
homotopy equivalence, giving a decomposition 
\[\Sigma\cpinf\wedge\cpinf\simeq\bigvee E_{1}\vee E_{2}.\] 
 
The homotopy theoretic Krull-Schmidt theorem in~\cite{Gr} 
implies that if $Y$ is a simply-connected co-$H$-space and 
there are decompositions 
\(f\colon\namedright{Y}{\simeq}{\bigvee_{i=1}^{n} A_{i}}\)  
and 
\(g\colon\namedright{Y}{\simeq}{\bigvee_{j=1}^{m} B_{j}}\) 
then $g$ determines a decomposition of each space $A_{i}$ 
into a wedge of $j$ summands, and similarly for $f$ with 
respect to each $B_{j}$. In our case, the first two decompositions 
of $\Sigma\cpinf\wedge\cpinf$ above combine to produce a 
homotopy decomposition 
\[\Sigma\cpinf\wedge\cpinf\simeq\bigvee_{i,j=1}^{p-1} A_{i,j}\] 
where $\rcohlgy{A_{i,j}}$ consists of those elements in 
$\cohlgy{\Sigma\cpinf\wedge\cpinf}$ in bidegrees of the form 
$(2i+1+2k(p-1), 2i+2l(p-1))$ for $k,l\geq 0$. Combining this 
with the third decomposition above gives a refined decomposition 
\[\Sigma\cpinf\wedge\cpinf\simeq\bigvee_{i,j=1}^{p-1} 
       (A_{i,j}^{+}\vee A_{i,j}^{-})\]   
where $\cohlgy{A_{i,j}^{+}}$ consists of those elements in 
$\cohlgy{A_{i,j}}$ which are also symmetric when considered 
as elements of $\cohlgy{\Sigma\cpinf\wedge\cpinf}$ and 
$\cohlgy{A_{i,j}^{-}}$ is the corresponding complement. Hence 
there is a homotopy decomposition 
\begin{equation} 
  \label{refinedQS} 
  \Omega\Sigma\cpinf\simeq\cpinf\times 
      \Omega\left(\bigvee_{i,j=1}^{p-1} (A_{i,j}^{+}\vee A_{i,j}^{-})\right). 
\end{equation}  

Now to give a $p$-local decomposition of $\QSymm$, we can 
start from the homotopy decomposition in~(\ref{refinedQS}) 
involving $p(p-1)$ wedge summands rather than the decomposition 
in~(\ref{BRdecomp}) involving just $p-1$ summands. As before, 
Theorem~\ref{HM} can be applied to decompose 
$\Omega(\bigvee_{i,j=1}^{p-1} (A_{i,j}^{+}\vee A_{i,j}^{-}))$ 
into a product of looped co-$H$-spaces. If desired, each 
factor can be even further decomposed using 
Theorem~\ref{Amincomplete}.

\end{document}